\newtheorem{theorem}{Theorem}[section]
\newtheorem{proposition}[theorem]{Proposition}
\newtheorem{corollary}[theorem]{Corollary}
\newtheorem{lemma}[theorem]{Lemma}
\newtheorem{conjecture}[theorem]{Conjecture}
\theoremstyle{definition}
\newtheorem{definition}[theorem]{Definition}
\newtheorem{example}[theorem]{Example}
\newtheorem{remark}[theorem]{Remark}
\newcommand{\PP}{\mathbb{P}}
\newcommand{\QQ}{\mathbb{Q}}
\newcommand{\RR}{\mathbb{R}}
\newcommand{\ZZ}{\mathbb{Z}}
\newcommand{\cO}{\mathcal{O} }
\newcommand{\cM}{\mathcal{M} }
\newcommand{\rH}{\mathrm{H} }
\newcommand{\rN}{\mathrm{N} }
\newcommand{\spec}{\mathrm{Spec}\;}
\newcommand{\Mznb}{\overline{\mathrm{M}}_{0,n}}
\def\SL{\mathrm{SL}}
\def\git{/\!/ }
\def\nef{\mathrm{Nef} }
\def\eff{\mathrm{Eff} }
\def\checkedn{19}
\def\bpfcheckedn{16}
\begin{document}

\title{On the $S_{n}$-invariant F-conjecture}
\date{\today}

\author{Han-Bom Moon}
\address{Department of Mathematics, Fordham University, Bronx, NY 10458}
\email{hmoon8@fordham.edu}

\author{David Swinarski}
\address{Department of Mathematics, Fordham University, New York, NY 10023}
\email{dswinarski@fordham.edu}

\begin{abstract}
By using classical invariant theory, we reduce the $S_{n}$-invariant F-conjecture to a feasibility problem in polyhedral geometry. We show by computer that for $n \le \checkedn$, every integral $S_{n}$-invariant F-nef divisor on the moduli space of genus zero stable pointed curves is semi-ample, over arbitrary characteristic. Furthermore, for $n \le \bpfcheckedn$, we show that for every integral $S_{n}$-invariant nef (resp. ample) divisor $D$ on the moduli space, $2D$ is base-point-free (resp. very ample). As applications, we obtain the nef cone of the moduli space of stable curves without marked points, and the semi-ample cone that of the moduli space of genus 0 stable maps to Grassmannian for small numerical values. 
\end{abstract}

\maketitle

%%%%%%%%%%%%%%%%%%%%%%%%%%%%%%%%%%%

\section{Introduction}

When one studies birational geometric aspects of a projective variety $X$, the first step is to understand two cones of divisors in $\mathrm{N}^{1}(X)$: the effective cone $\eff(X)$ and the nef cone $\nef(X)$. The first cone contains information on rational contractions of $X$, and the second cone contains data on regular contractions of $X$. 

In this paper, we study the moduli space $\Mznb$ of genus 0 stable
pointed curves. Its elementary construction in \cite{Kap93b} suggests
that its geometry is similar to that of toric varieties, and therefore
many people conjectured that $\eff(\Mznb)$ and $\nef(\Mznb)$ are
polyhedral. However, the birational geometric properties of $\Mznb$
seem to be very complicated. The cone of effective divisors was
conjectured to be generated by boundary divisors. However, now there
are many known examples of non-boundary extremal effective divisors
(\cite{Ver02, CT13, Opi16}). Doran, Giansiracusa, and Jensen showed
that the effectivity of a divisor class depends on the base ring, and
there are generators of the Cox ring which do not lie on extremal rays of $\eff(\Mznb)$ (\cite{DGJ14}). Furthermore, recently it was shown that $\Mznb$ is not a Mori dream space for $n \ge 10$ (\cite{CT15, GK16, HKL16}). On the other hand, the $S_{n}$-invariant part $\eff(\Mznb)^{S_{n}}$ is simplicial and generated by symmetrized boundary divisors $\{B_{i} = \sum_{|I| = i}B_{I}\}$ (\cite[Theorem 1.3]{KM13}). 

For $\nef(\Mznb)$, there has been less progress, but there is an explicit conjectural description. From the analogy with toric varieties again, a natural candidate of the generating set of the Mori cone of $\Mznb$ is the set of one-dimensional boundary strata, called \emph{F-curves}. 

\begin{definition}\label{def:Fnef}
An effective divisor $D = \sum b_{I}B_{I}$ on $\Mznb$ is \emph{F-nef} if for any F-curve $F$, $D \cdot F \ge 0$. 
\end{definition}

Although this definition uses intersection theory, we can explicitly state the set of linear inequalities with respect to the coefficients $\{b_{I}\}$ (\cite[(0.14)]{GKM02}). Thus we can formally define F-nefness over $\spec \ZZ$, as well.

Fulton conjectured that the Mori cone of $\Mznb$ is generated by F-curves. Dually:

\begin{conjecture}[F-conjecture]\label{conj:Fconj}
A divisor on $\Mznb$ is nef if and only if it is F-nef.
\end{conjecture}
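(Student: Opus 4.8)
The forward implication is immediate and needs nothing: an F-curve is an honest irreducible complete curve in $\Mznb$, so a nef divisor meets it nonnegatively. All the content is in the converse, and the plan is to attack it by proving the logically stronger statement that \emph{every integral F-nef divisor is semi-ample}. Since a base-point-free divisor is automatically nef, and since base-point-freeness passes to specializations of the base ring, this stronger conclusion would yield the conjecture over $\spec\ZZ$ and in every characteristic simultaneously, in the same spirit as the small-$n$ results established earlier. Equivalently, I am claiming that the Mori cone $\overline{\mathrm{NE}}(\Mznb)$ is spanned by the classes of F-curves.

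First I would induct on $n$, the base cases being classical. The key structural inputs are the boundary and the interior. Restriction to the boundary reduces to smaller moduli: each boundary divisor is the image of a gluing morphism $\overline{\mathrm{M}}_{0,a+1}\times\overline{\mathrm{M}}_{0,b+1}\to\Mznb$ with $a+b=n$ and $a,b\ge 2$, so both factors have strictly fewer markings; the F-curves contained in the boundary are precisely those induced from F-curves on the factors, so the pullback of an F-nef $D$ to each boundary stratum is again F-nef and hence, by the inductive hypothesis, semi-ample there. The interior supplies the complementary leverage: $\Mzn$ is an affine variety (a hyperplane-arrangement complement) and therefore contains no complete curves, so every complete curve in $\Mznb$ must meet the boundary. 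This is the toehold that makes an extension-from-the-boundary argument conceivable rather than hopeless.

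To promote semi-ampleness from the boundary to all of $\Mznb$, I would pursue two complementary routes. The primary route is polyhedral and extends the method of this paper from its $S_{n}$-invariant shadow to the \emph{full} N\'eron--Severi space: assemble an explicit finite family $\cS$ of divisors that are semi-ample by construction -- pullbacks of ample classes under all forgetful maps $\pi_{T}\colon\Mznb\to\overline{\mathrm{M}}_{0,|T|}$, conformal-blocks divisors (base-point-free by Fakhruddin), and pullbacks under reduction maps to Hassett's weighted spaces -- and then certify, by the same feasibility computation but now over the non-invariant Picard group, that the cone generated by $\cS$ coincides with the F-nef cone cut out by the inequalities of \cite{GKM02}. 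The containment $\mathrm{cone}(\cS)\subseteq\{\text{F-nef}\}$ is automatic since semi-ample $\Rightarrow$ nef $\Rightarrow$ F-nef; if the reverse containment also holds, then every integral F-nef class is a nonnegative rational combination of semi-ample classes, and as a sum of semi-ample divisors is semi-ample, the proof concludes. The secondary route, for curves that genuinely meet $\Mzn$, is a bend-and-break degeneration: since such a curve $C$ must limit onto the boundary, one tries to break $[C]$ into classes supported on boundary strata and F-curves and then invokes the boundary induction to force $D\cdot C\ge 0$.

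The hard part will be exactly this last step -- the spanning statement, equivalently the control of extremal interior curves. It is known (work of Pixton and others) that conformal-blocks and forgetful-pullback divisors fail to fill out the nef cone once $n$ grows, so the family $\cS$ will generically be too small and the polyhedral certificate will simply return infeasible; closing the gap would require genuinely new families of base-point-free divisors. The degeneration route is no easier, since one cannot guarantee that the limiting cycle has all of its components of F-type without, in effect, reproving the conjecture. This obstruction -- the absence of either a complete supply of semi-ample divisors or a uniform geometric mechanism degenerating every extremal curve into F-curves -- is precisely what separates the full conjecture from the $S_{n}$-invariant case treated here, where averaging collapses the problem onto the simplicial invariant cone of \cite[Theorem 1.3]{KM13}.
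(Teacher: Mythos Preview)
This statement is Conjecture~\ref{conj:Fconj} in the paper, and the paper does \emph{not} prove it: it is the full F-conjecture, open in general and known only for $n\le 7$ by \cite{KM13}. The paper's contribution is to the \emph{weaker} $S_{n}$-invariant version (Conjecture~\ref{conj:SnFconj}), and even that only for $n\le\checkedn$ via the polyhedral/graphical-algebra machinery of Sections~\ref{sec:Gbpf}--\ref{sec:computation}. So there is no ``paper's own proof'' to compare against.

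Your proposal is honest enough to diagnose itself: the final paragraph concedes that neither route closes. That is correct, and it means what you have written is an outline of a strategy, not a proof. Two of the gaps are worth naming precisely. First, the inductive step does not do what you need. Even granting that $D|_{B_I}$ is semi-ample on every boundary divisor, this does not yield semi-ampleness of $D$ on $\Mznb$: sections on $B_I$ need not lift, and the absence of complete curves in $\Mzn$ says nothing about base loci of linear systems. Nefness is about curves; semi-ampleness is about sections; the affineness of $\Mzn$ bridges the former (any complete curve meets $\partial\Mznb$) but not the latter. Second, your ``primary route'' is the non-invariant analogue of exactly what this paper does in the $S_n$-invariant case, and you already note that the known families $\cS$ (conformal blocks, forgetful and Hassett pullbacks) provably fail to span the F-nef cone for $n$ not small. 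Absent new base-point-free divisors, the feasibility certificate will return empty, and the argument stalls. The bend-and-break alternative is circular as stated: controlling the components of the limit cycle is equivalent to the Mori-cone statement you are trying to prove.

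In short: the statement is an open conjecture, the paper does not claim a proof, and your proposal correctly identifies---but does not overcome---the central obstruction.
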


This conjecture was shown for $n \le 7$ in \cite{KM13} by Keel and McKernan in characteristic 0. However, as $n$ grows, the Picard number of $\Mznb$ grows exponentially, so $\overline{\mathrm{M}}_{0,8}$ is already out of reach. 

On the other hand, we may ask the same question for $S_{n}$-invariant divisors:

\begin{conjecture}[$S_{n}$-invariant F-conjecture]\label{conj:SnFconj}
An $S_{n}$-invariant divisor on $\Mznb$ is nef if and only if it is F-nef.
\end{conjecture}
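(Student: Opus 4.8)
The plan is to prove the nontrivial implication, ``F-nef $\Rightarrow$ nef''; the converse is immediate, since every F-curve is an honest irreducible curve on $\Mznb$. In fact I would aim for the stronger and characteristic-free statement that every integral $S_{n}$-invariant F-nef divisor is \emph{semi-ample}: semi-ampleness implies nefness over any base, and a non-negative integral combination of semi-ample classes is again semi-ample, so it would suffice to exhibit a sufficiently rich supply of semi-ample $S_{n}$-invariant classes and then show that they account for all integral F-nef classes.

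First I would reduce the question to a finite polyhedral problem. The space $\mathrm{N}^{1}(\Mznb)^{S_{n}}$ is spanned by the symmetrized boundary classes $B_{2},\dots,B_{\lfloor n/2\rfloor}$, so it has dimension $\lfloor n/2\rfloor-1$, and the F-nef inequalities --- one for each $S_{n}$-orbit of F-curves, i.e.\ one for each unordered partition of $n$ into four positive parts --- cut out a rational polyhedral cone $C_{n}\subseteq \mathrm{N}^{1}(\Mznb)^{S_{n}}$ whose defining inequalities in the coordinates $\{b_{i}\}$ are the explicit ones of \cite{GKM02}. One then computes the Hilbert basis of the monoid of integral points of $C_{n}$; it is enough to prove that each Hilbert basis element is semi-ample.

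Second, I would assemble an explicit finite list $D_{1},\dots,D_{N}$ of semi-ample $S_{n}$-invariant classes. This is where classical invariant theory enters: the $\SL_2$-conformal blocks divisors of Fakhruddin with symmetric weight data are globally generated on $\Mznb$, and their expansions in the basis $\{B_{i}\}$ are computed by the factorization and propagation rules, which for $\mathfrak{sl}_2$ amount to the representation theory of binary forms; together with the pullbacks of ample classes under the symmetric GIT reduction morphisms $\Mznb\to(\PP^{1})^{n}\git \SL_2$, which are defined over $\spec\ZZ$, this gives the desired list with known coordinates. The remaining ``feasibility problem in polyhedral geometry'' of the abstract is then: for each Hilbert basis element $h$ of $C_{n}$, decide whether $h=\sum_{j}a_{j}D_{j}$ for some $a_{j}\in\ZZ_{\ge 0}$. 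If this integer program is feasible at every $h$ for every $n\le\checkedn$, then every integral $S_{n}$-invariant F-nef divisor is semi-ample, hence nef, in every characteristic, establishing Conjecture~\ref{conj:SnFconj} in that range; the refinement that $2D$ is base-point-free (resp.\ very ample) for nef (resp.\ ample) $D$ when $n\le\bpfcheckedn$ should follow by restricting to the base-point-free members of $\{D_{j}\}$ and controlling the bounded denominator incurred in passing from the rational cone $C_{n}$ to its integral monoid.

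The step I expect to be the main obstacle is the second one: one must produce invariant-theoretic generators $\{D_{j}\}$ that are simultaneously (a) provably semi-ample (indeed base-point-free) in \emph{arbitrary} characteristic --- which for conformal blocks bundles requires genuine care in small characteristics --- and (b) numerous and well-distributed enough that the integer feasibility problem has a solution at every Hilbert basis point. A secondary difficulty is purely computational: the number of F-curve inequalities, the size of the Hilbert basis, the number of candidate semi-ample generators, and the cost of the integer-programming search all grow with $n$, which is why the verification is carried out by computer and only through $n=\checkedn$.
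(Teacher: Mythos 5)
You have correctly identified the overall architecture used here: prove the stronger statement that integral $S_{n}$-invariant F-nef divisors are semi-ample over $\spec\ZZ$, reduce to finitely many integral classes in the rational polyhedral cone cut out by the F-inequalities in the coordinates $\{B_{i}\}$, and dispatch the result by a computer search through $n\le\checkedn$ (with the Hilbert basis entering only for the effective statements about $2D$; for bare semi-ampleness the extremal ray generators suffice, since semi-ample classes are closed under nonnegative rational combinations). But the engine you propose for producing semi-ample classes is not the one used, and it is the step where your plan would fail. The paper never forms a finite list $D_{1},\dots,D_{N}$ of semi-ample classes and asks whether each F-nef class is a nonnegative integral combination of them. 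Instead, for \emph{each} F-nef divisor $D=\pi^{*}(cD_{2})-\sum a_{i}B_{i}$ it builds explicit sections of $|mD|$ directly: these are the graphical divisors $D_{\Gamma}$, supports of products of brackets $X_{i}Y_{j}-X_{j}Y_{i}$ on $(\PP^{1})^{n}\git\SL_{2}$, indexed by multigraphs $\Gamma$ on $[n]$. The feasibility problem is then, for each F-point $F=\cap_{J\in T}B_{J}$, the nonemptiness of the polytope of edge weightings $w_{ij}\ge 0$ with $\sum_{j}w_{ij}=c(n-1)$, $w_{I}\ge a_{|I|}$ for all $I$, and $w_{J}=a_{|J|}$ for $J\in T$ --- a condition guaranteeing a section through whose support $F$ does not pass. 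This is a per-divisor, per-F-point linear feasibility problem in $\QQ^{\binom{n}{2}}$, far more flexible than membership in a fixed finitely generated subcone, and the sections are monomials in the $Z_{ij}$, hence defined over $\ZZ$, which is exactly what makes the argument characteristic-free.

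The concrete gap in your version is point (a) of your own ``main obstacle,'' and it is not a technicality. Global generation of $\SL_{2}$-conformal blocks divisors is a characteristic-zero theorem; no characteristic-free construction of these bundles with the required positivity is available, so your list would not prove anything over $\spec\ZZ$. Worse, even over $\CC$ there is no reason the cone generated by symmetric $\mathfrak{sl}_{2}$ conformal blocks classes together with GIT pullbacks should exhaust the symmetric F-nef cone, so the integer program you pose could be infeasible at some Hilbert basis element even when the divisor in question is semi-ample; your method would then be silent rather than wrong, but it would not establish the conjecture in the claimed range. If you want to salvage your approach in the paper's spirit, replace ``finite list of semi-ample classes'' by ``for each $D$, a combinatorially parametrized family of boundary members of $|mD|$'' and prove a criterion (as in Proposition \ref{prop:Gbpf}) for when their common zero locus misses every F-point; note also that Example \ref{ex:bpfnotGbpf} shows this sub linear system can be strictly smaller than $|D|$, so the method gives a polyhedral \emph{lower} bound $\mathrm{GS}(\Mznb)$ for the nef cone, which happens to coincide with the F-nef cone for $n\le\checkedn$ but is not known to do so in general --- the statement remains a conjecture.
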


In characteristic 0, Gibney proved Conjecture \ref{conj:SnFconj} for $n \le 24$ (\cite{Gib09}). Recently, Fedorchuk showed that the $S_{n}$-invariant F-conjecture is true for $n \le 16$ in arbitrary characteristic (\cite{Fed14b}). 

\subsection{Results}

In this paper, we translate Conjecture \ref{conj:SnFconj} into a feasibility problem in polyhedral geometry, namely, the nonemptiness of certain polytopes. We use this approach to show the following result. 

\begin{theorem}[Theorem \ref{thm:semiamplecomputation}]\label{thm:mainthm}
\begin{enumerate}
\item For $n \le \checkedn$, over $\spec \ZZ$, every $S_{n}$-invariant F-nef divisor on $\Mznb$ is semi-ample. 
\item For $n \le \bpfcheckedn$, over $\spec \ZZ$, for every $S_{n}$-invariant F-nef divisor on $\Mznb$, $2D$ is base-point-free.
\end{enumerate} 
\end{theorem}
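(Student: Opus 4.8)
The plan is to reduce the semi-ampleness (and base-point-freeness) statements to explicit finite computations by producing, for each $S_n$-invariant F-nef divisor class $D$, an effective representative whose stable base locus is empty. Since $\eff(\Mznb)^{S_n}$ is simplicial with generators the symmetrized boundaries $B_1, \dots, B_{\lfloor n/2 \rfloor}$ (by \cite[Theorem 1.3]{KM13}), the $S_n$-invariant nef cone and F-nef cone are both finitely generated rational polyhedral cones living inside this simplicial cone; I would first compute the extremal rays of the $S_n$-invariant F-nef cone via the explicit linear inequalities from \cite[(0.14)]{GKM02}. Because semi-ampleness and base-point-freeness of a fixed multiple are preserved under taking nonnegative integral combinations (the relevant linear systems add), it suffices to exhibit, for each primitive generator $D$ of an extremal ray, a semi-ample (resp.\ base-point-free after doubling) divisor in a suitable multiple of its class, together with enough semi-ample classes to span a full-dimensional subcone — then invoke the standard fact that a sum of base-point-free divisors is base-point-free. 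So the whole problem becomes: certify semi-ampleness on finitely many explicit classes.

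For the certification step I would use morphisms from $\Mznb$ to other projective varieties — pullbacks of very ample (or base-point-free) divisors along such maps are automatically semi-ample (resp.\ base-point-free). The natural source of such maps for $S_n$-invariant classes comes from classical invariant theory: GIT quotients $(\PP^1)^n /\!/ \SL_2$ with symmetric linearizations, the conformal blocks / Kapranov-style maps, and Fedorchuk's morphisms (\cite{Fed14b}) all produce $S_n$-invariant semi-ample classes with controlled positivity. Concretely, I expect to identify a spanning set of semi-ample $S_n$-invariant divisors as pullbacks along explicit birational or finite morphisms whose target is a projective space or a GIT quotient, and to check by computer — using the coordinates $B_1, \dots, B_{\lfloor n/2 \rfloor}$ — that the cone they generate contains the full $S_n$-invariant F-nef cone for $n \le \checkedn$ (resp.\ that twice each such generator is base-point-free for $n \le \bpfcheckedn$). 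This is where the "feasibility problem in polyhedral geometry" enters: nonemptiness of the polytope of ways to write a given F-nef class as a nonnegative combination of the certified semi-ample classes.

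The main obstacle is the combinatorial explosion: the number of F-curve inequalities and the number of candidate semi-ample classes both grow quickly with $n$, so even the $S_n$-invariant problem is only tractable by machine, and one must organize the invariant theory so that the semi-ample generators are cheap to write down in the $B_i$-basis and the resulting linear-programming feasibility checks stay within reach for $n$ up to $\checkedn$. A secondary subtlety is characteristic independence: the morphisms used for certification must be defined over $\spec \ZZ$ (or at least the relevant linear systems must be), so I would favor GIT and conformal-blocks constructions, which behave well in arbitrary characteristic, over any argument that implicitly uses resolution of singularities or vanishing theorems. The base-point-freeness refinement in part (2) is more delicate than part (1) because it is not enough for the class to be a pullback of something base-point-free — one needs the doubling to clear the base locus uniformly — which is presumably why $\bpfcheckedn < \checkedn$; handling this will require a finer analysis of the linear systems (e.g.\ Reider-type or explicit section-counting arguments) on the certified generators, again carried out by computer.
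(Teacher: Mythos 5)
There is a genuine gap, and it lies exactly at the step you leave unspecified: how to certify semi-ampleness of the extremal (or Hilbert-basis) generators of the $S_{n}$-invariant F-nef cone. Your only proposed mechanism is to write each F-nef class as a nonnegative combination of pullbacks of ample or base-point-free divisors along known morphisms (GIT quotients, Kapranov/conformal-blocks maps, etc.), so that your ``feasibility problem'' is the nonemptiness of the polytope of such decompositions. But the subcone generated by pullbacks of ample classes along known contractions is not known --- and is not expected --- to fill out the F-nef cone for $n$ up to $\checkedn$; if it did, the $S_n$-invariant F-conjecture in this range would follow from a routine cone computation with no new input. The whole difficulty is that many F-nef classes are not sums of pullbacks, and for those your method produces no sections at all. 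The paper's key idea, which your proposal is missing, is different: fix the \emph{single} contraction $\pi:\Mznb\to(\PP^{1})^{n}\git\SL_{2}$, write $D=\pi^{*}(cD_{2})-\sum_{i\ge3}a_{i}B_{i}$ (Lemma \ref{lem:Fnefexpression}), identify $|D|$ with a \emph{non-complete} linear system $|cD_{2}|_{\mathbf{a}}$ on the GIT quotient, and then use the Kempe generators of the graphical algebra to exhibit explicit effective members $\widetilde{D}_{\Gamma}\in|D|$ avoiding a given F-point. The feasibility problem that actually gets solved by computer is: for each F-point $F=\bigcap_{J\in T}B_{J}$, does there exist a multigraph (equivalently a rational edge-weighting $w_{ij}\ge 0$) with constant degree $c(n-1)$, with $w_{I}\ge a_{|I|}$ for all $I$ and $w_{J}=a_{|J|}$ for $J\in T$ (Proposition \ref{prop:Gbpf}, Corollary \ref{cor:polyhedralcone})? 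That is a polytope in the space of edge-weightings fibered over $\mathrm{N}^{1}(\Mznb)^{S_{n}}$, not a polytope of decompositions into known semi-ample classes.

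Two smaller points. First, for part (2) it does not suffice to handle primitive generators of extremal rays: an arbitrary integral F-nef divisor need not be a nonnegative \emph{integral} combination of them, which is why the paper runs the base-point-freeness check over the Hilbert basis of the cone. Second, your suggestion of Reider-type arguments for the doubling statement is a dead end here --- those are characteristic-zero surface techniques, whereas the paper gets the factor $2$ for free from the combinatorics (the relevant polytopes acquire integral points after scaling by $2$) and then deduces very ampleness of $2A$ from the Keel--Tevelev result that $K_{\Mznb}+B$ is very ample, which is valid in arbitrary characteristic.
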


By using \cite{KT09} or \cite{Tev07}, we obtain the following consequence. 

\begin{theorem}[Theorem \ref{thm:veryample}]\label{thm:veryampleintro}
Suppose $n \le \bpfcheckedn$. Over any algebraically closed field, for every integral $S_{n}$-invariant ample divisor $A$ on $\Mznb$, $2A$ is very ample.
\end{theorem}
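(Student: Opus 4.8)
The plan is to deduce Theorem~\ref{thm:veryampleintro} from Theorem~\ref{thm:mainthm}(2) by augmenting it with a single fixed very ample $S_{n}$-invariant divisor on $\Mznb$ of the kind constructed in \cite{KT09} (see also \cite{Tev07}); for concreteness one may take $\Lambda$ to be the polarization of the cross-ratio embedding $\Mznb\hookrightarrow\prod_{|S|=4}\overline{\mathrm{M}}_{0,S}\cong(\PP^{1})^{\binom{n}{4}}$, whose class has explicit symmetrized-boundary coordinates and is $S_{n}$-invariant. The bridge from base-point-freeness to very ampleness is the elementary fact that if $\Lambda$ is very ample and $R$ is base-point-free then $\Lambda+R$ is very ample: the sublinear system of $|\Lambda+R|$ spanned by products of sections gives a morphism factoring as $\Mznb\xrightarrow{(\phi_{\Lambda},\phi_{R})}\PP^{a}\times\PP^{b}$ followed by the Segre embedding, $(\phi_{\Lambda},\phi_{R})$ is a closed immersion because $\phi_{\Lambda}$ already is and $\Mznb$ is proper, and $\phi_{|\Lambda+R|}$ factors this morphism through a linear projection, hence is itself a closed immersion. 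So it suffices to exhibit, for each integral ample $S_{n}$-invariant $A$, a decomposition $2A=\Lambda+R$ with $R$ base-point-free.

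Since an ample divisor is nef, hence F-nef, Theorem~\ref{thm:mainthm}(2) already gives that $2A$ is base-point-free for $n\le\bpfcheckedn$; the content of the argument is to control the remainder $R=2A-\Lambda$. The space $\mathrm{N}^{1}(\Mznb)^{S_{n}}$ is small --- of dimension $\lfloor n/2\rfloor-1$ --- and by the polyhedral analysis carried out earlier in the paper the $S_{n}$-invariant nef cone inside it is completely explicit; $\Lambda$ likewise has explicit coordinates. Choosing $\Lambda$ as economical as possible (and, if parity requires it, replacing $\Lambda$ by $\Lambda$ plus a small base-point-free $S_{n}$-invariant divisor, which keeps it very ample by the previous paragraph and makes $2A-\Lambda$ twice an integral $S_{n}$-invariant class), one reduces the statement ``$2A-\Lambda$ is F-nef for every integral ample $S_{n}$-invariant $A$'' to a finite verification on the finitely many primitive extremal such $A$. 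Granting this, $\tfrac{1}{2}R$ is an $S_{n}$-invariant F-nef divisor, so Theorem~\ref{thm:mainthm}(2) applies once more to give that $R$ is base-point-free, and the previous paragraph then yields that $2A=\Lambda+R$ is very ample. The whole argument is transparent in low dimension: for $n=5$, for instance, $\mathrm{N}^{1}(\Mznb)^{S_{5}}=\RR\cdot(-K)$, one may take $\Lambda=-2K$, and every integral ample $S_{5}$-invariant divisor is a positive multiple of $-K$, so $2A-\Lambda$ is an effective multiple of $-2K$ and in particular base-point-free.

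The reduction to an arbitrary algebraically closed field requires nothing extra: Theorem~\ref{thm:mainthm}(2) is proved over $\spec\ZZ$, base-point-freeness of an integral divisor is preserved under base change, and the very ampleness produced above is checked on geometric fibers. The step I expect to be the genuine obstacle is precisely the finite check in the second paragraph: one must guarantee, with only a single factor of $2$ to spend, that the Keel--Tevelev polarization fits inside $2A$ for \emph{all} integral ample $S_{n}$-invariant $A$ --- including those lying right against the boundary of the nef cone, not merely those deep in its interior. This is exactly where one needs both the explicit description of the $S_{n}$-invariant nef cone obtained in the body of the paper and the explicit symmetrized-boundary coordinates of $\Lambda$; the remaining steps are formal.
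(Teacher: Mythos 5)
Your overall architecture --- write $2A=\Lambda+R$ with $\Lambda$ very ample (from Keel--Tevelev) and $R$ base-point-free (from Theorem~\ref{thm:mainthm}(2)), then use the elementary fact that a very ample plus a base-point-free divisor is very ample --- is exactly the paper's. But the step you yourself flag as ``the genuine obstacle'' is the entire content of the proof, and your concrete proposal for it fails. With $\Lambda$ the cross-ratio polarization $\sum_{|S|=4}\pi_{S}^{*}\cO(1)$, an F-curve with partition of sizes $(a,b,c,d)$ meets $\Lambda$ in $abcd$ points, which grows like $(n/4)^{4}$. Meanwhile $A=K_{\Mznb}+B$ is itself an integral $S_{n}$-invariant ample divisor (it is very ample by \cite{KT09}), and it meets \emph{every} F-curve in exactly $1$. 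So for this $A$ one has $(2A-\Lambda)\cdot F=2-abcd<0$ on most F-curves: $2A-\Lambda$ is not even F-nef, and no amount of finite checking rescues that choice of $\Lambda$. The fallback you propose is also not available: the integral ample $S_{n}$-invariant classes are the interior lattice points of the nef cone, and these are not generated as a monoid by finitely many primitive elements (already for a simplicial $2$-dimensional cone the points $(1,n)$ are all irreducible), so ``F-nefness of $2A-\Lambda$ for all integral ample $A$'' cannot be reduced to finitely many extremal cases.

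The missing idea is the numerical minimality of the log canonical divisor: take $\Lambda=2(K_{\Mznb}+B)$ itself. Since $(K_{\Mznb}+B)\cdot F=1$ for every F-curve $F$, and an integral ample $A$ satisfies $A\cdot F\ge 1$ (an integer that is positive), the difference $A-(K_{\Mznb}+B)$ is an integral $S_{n}$-invariant F-nef divisor for \emph{every} integral ample $S_{n}$-invariant $A$, with no case analysis at all. Theorem~\ref{thm:mainthm}(2) then makes $R=2\bigl(A-(K_{\Mznb}+B)\bigr)$ base-point-free for $n\le\bpfcheckedn$, and $2A=2(K_{\Mznb}+B)+R$ is very ample. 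This is the paper's proof; your writeup correctly identifies where the difficulty sits but does not supply the one fact --- $(K_{\Mznb}+B)\cdot F=1$ --- that dissolves it.
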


We immediately obtain the following two corollaries. 

\begin{corollary}
For $n \le \checkedn$, over any algebraically closed field, $\nef(\Mznb/S_{n})$ is equal to the F-nef cone and every nef divisor on $\Mznb/S_{n}$ is semi-ample.
\end{corollary}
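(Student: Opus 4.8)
The plan is to assemble this from Theorem~\ref{thm:mainthm}(1) together with standard facts about finite group quotients; the assertion that $\nef(\Mznb/S_{n})$ ``equals the F-nef cone'' is to be read as follows: under the natural identification $\mathrm{N}^{1}(\Mznb/S_{n})_{\QQ}\cong\mathrm{N}^{1}(\Mznb)^{S_{n}}_{\QQ}$, the nef cone of the quotient corresponds to the cone of $S_{n}$-invariant F-nef classes. So the first step is to set up that identification. Write $\pi\colon \Mznb\to\Mznb/S_{n}$ for the quotient map; it is finite and surjective, and since $\Mznb$ is smooth and projective the quotient is a projective $\QQ$-factorial variety. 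Because $\pi_{*}\pi^{*}=(\deg\pi)\cdot\mathrm{id}$ on $\QQ$-divisor classes, pullback is an isomorphism $\pi^{*}\colon \mathrm{N}^{1}(\Mznb/S_{n})_{\QQ}\xrightarrow{\ \sim\ }\mathrm{N}^{1}(\Mznb)^{S_{n}}_{\QQ}$ (with inverse $\tfrac{1}{\deg\pi}\pi_{*}$), and under it the nef cone of $\Mznb/S_{n}$ goes to $\nef(\Mznb)\cap\mathrm{N}^{1}(\Mznb)^{S_{n}}_{\QQ}$, since a class on the target of a finite surjective morphism is nef if and only if its pullback is.

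Next I would run the chain of implications, noting that every step is valid over an arbitrary algebraically closed field $k$. Fix such a $k$ and a nef $\QQ$-divisor class $E$ on $\Mznb/S_{n}$. Then $\pi^{*}E$ is a nef, hence F-nef, $S_{n}$-invariant class; clearing denominators and applying Theorem~\ref{thm:mainthm}(1) shows $\pi^{*}E$ is semi-ample, and therefore so is $E$, as semi-ampleness descends along the finite surjection $\pi$. This gives ``every nef divisor on $\Mznb/S_{n}$ is semi-ample.'' Conversely, if $E$ is F-nef then $\pi^{*}E$ is $S_{n}$-invariant and F-nef, so by Theorem~\ref{thm:mainthm}(1) it is semi-ample, in particular nef, and hence $E$ is nef; together with the definitional implication nef $\Rightarrow$ F-nef (Definition~\ref{def:Fnef}), this yields the asserted equality of cones.

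Two points need to be stated carefully, and I regard the first as the main (though routine) technical ingredient. First, the descent of semi-ampleness along $\pi$: if $\pi^{*}E$ is semi-ample then for some $m>0$ the invariant line bundle $\cO_{\Mznb}(m\pi^{*}E)$ is globally generated, and passing to $S_{n}$-invariant sections (equivalently, using that $R(\Mznb/S_{n},E)=R(\Mznb,\pi^{*}E)^{S_{n}}$ is finitely generated when $R(\Mznb,\pi^{*}E)$ is, with matching induced morphisms) produces the corresponding globally generated multiple of $E$ — this is the standard fact that semi-ampleness is insensitive to finite surjective base change. Second, the passage from ``over $\spec\ZZ$'' in Theorem~\ref{thm:mainthm} to ``over any algebraically closed field'': a globally generating system of sections of $\cO(mD)$ on $\Mznb_{\ZZ}$ defines a $\ZZ$-morphism to a projective space pulling back $\cO(1)$ to $\cO(mD)$, and base-changing that morphism to $k$ keeps the pullback globally generated, so $D$ remains semi-ample on $\Mznb_{k}$. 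No other step is delicate: the inclusion $\nef\subseteq$ F-nef is immediate, and the identification of the nef cone of a finite quotient with the invariant nef cone is classical.
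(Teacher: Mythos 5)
Your proof is correct and follows exactly the route the paper intends: the corollary is stated there as an ``immediate'' consequence of Theorem~\ref{thm:mainthm}(1), and what you write out is precisely the standard bookkeeping that makes this honest --- the identification $\mathrm{N}^{1}(\Mznb/S_{n})_{\QQ}\cong\mathrm{N}^{1}(\Mznb)^{S_{n}}_{\QQ}$ via the finite quotient map, ascent/descent of nefness and of semi-ampleness along that finite surjection, and preservation of global generation under base change from $\spec\ZZ$ to an algebraically closed field. No gaps; the only point worth phrasing carefully (as your parenthetical on invariant section rings already indicates) is that one generates a suitable \emph{power} of the descended bundle by invariant sections, e.g.\ by taking products over the $S_{n}$-orbit of a general generating section.
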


\begin{corollary}
For $n \le \checkedn$, over any algebraically closed field, the Mori cone of $\Mznb/S_{n}$ is generated by F-curves. 
\end{corollary}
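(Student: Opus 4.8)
The plan is to obtain this formally from the preceding corollary, using the standard duality between the nef cone and the Mori cone. Write $Y = \Mznb/S_{n}$. As the quotient of the projective variety $\Mznb$ by the finite group $S_{n}$, $Y$ is again a projective variety, so over any field the Mori cone $\overline{NE}(Y)$ (the closure of the cone of effective $1$-cycles) and the nef cone $\nef(Y) \subseteq \mathrm{N}^{1}(Y)_{\RR}$ are dual to one another: by definition $\nef(Y) = \overline{NE}(Y)^{\vee}$, and hence, since $\overline{NE}(Y)$ is a closed convex cone, $\overline{NE}(Y) = \nef(Y)^{\vee}$ (this is the curve side of Kleiman's criterion, and holds in arbitrary characteristic).

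Next I would describe the two relevant cones concretely. The F-curves on $\Mznb$ form finitely many $S_{n}$-orbits, indexed by the numerical types $(a,b,c,d)$ with $a+b+c+d = n$ and $a,b,c,d \ge 1$, and all F-curves in a single orbit have the same image under the quotient map $\Mznb \to Y$; let $\sigma \subseteq \mathrm{N}_{1}(Y)_{\RR}$ be the cone generated by these finitely many image classes. Being finitely generated, $\sigma$ is a closed convex cone, so $\sigma^{\vee\vee} = \sigma$. On the other side, by its very definition the F-nef cone of $Y$ is the set of divisor classes pairing non-negatively with every F-curve, i.e.\ it is exactly $\sigma^{\vee}$. (Equivalently, one may identify $\mathrm{N}^{1}(Y)_{\RR}$ and $\mathrm{N}_{1}(Y)_{\RR}$ with the $S_{n}$-invariant subspaces of $\mathrm{N}^{1}(\Mznb)_{\RR}$ and $\mathrm{N}_{1}(\Mznb)_{\RR}$ via pullback of divisors and pushforward of curves; under this identification the F-nef cone of $Y$ becomes the $S_{n}$-invariant F-nef cone of $\Mznb$, and $\nef(Y)$ becomes $\nef(\Mznb)^{S_{n}}$.)

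Now combine the two ingredients. The preceding corollary asserts that, for $n \le \checkedn$ and over any algebraically closed field, $\nef(Y) = \sigma^{\vee}$. Dualizing and using the duality of the first paragraph,
\[
\overline{NE}(Y) \;=\; \nef(Y)^{\vee} \;=\; \bigl(\sigma^{\vee}\bigr)^{\vee} \;=\; \sigma ,
\]
which says precisely that the Mori cone of $Y$ is generated by F-curves. Note that the inclusion $\sigma \subseteq \overline{NE}(Y)$ is automatic, since F-curves are effective; the content is the reverse inclusion, and this is furnished entirely by the equality (nef cone) $=$ (F-nef cone) of the preceding corollary. Since the deduction is purely formal, there is no genuine obstacle here. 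The one step that needs a little care is the bookkeeping identifying the nef cone, the Mori cone, and the F-curve inequalities on the quotient $Y$ — a routine matter for quotients by finite groups, where the intersection pairings correspond up to a positive scaling that does not affect which cones are cut out.
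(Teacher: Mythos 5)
Your argument is correct and is exactly the intended one: the paper states this corollary without proof as an immediate consequence of the preceding corollary, and the content is precisely the cone duality $\overline{NE}(Y)=\nef(Y)^{\vee}$ combined with the finitely generated (hence closed) cone $\sigma$ of F-curve classes satisfying $\sigma^{\vee\vee}=\sigma$. Your bookkeeping identifying the cones on the quotient with the $S_{n}$-invariant cones upstairs is the standard fact being used implicitly, so there is nothing to add.
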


This result also yields the description of the nef cone of some other moduli spaces. By \cite[Theorem 0.3]{GKM02}, we obtain the nef cone of $\overline{\cM}_{g}$, whose description was a question raised by Mumford. 

\begin{corollary}
Over any algebraically closed field, for $g \le \checkedn$, the nef cone of $\overline{\mathcal{M}}_{g}$, the moduli space of genus $g$ stable curves, is equal to the F-nef cone. 
\end{corollary}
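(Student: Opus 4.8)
The plan is to obtain this at once from the preceding corollary---the $S_n$-invariant F-conjecture for $\Mznb$ over an arbitrary algebraically closed field in the range $n \le \checkedn$---together with the reduction theorem \cite[Theorem 0.3]{GKM02}, which propagates the equality nef $=$ F-nef from genus zero to higher genus. That theorem asserts that if the $S_n$-invariant F-conjecture holds for $\Mznb$ for all $n \le g$, then every F-nef divisor on $\overline{\mathcal{M}}_g$ is nef (the reverse inclusion, that nef implies F-nef, being trivial since F-curves are honest curves). For $g \le \checkedn$ every value of $n$ called for satisfies $3 \le n \le g \le \checkedn$, so the hypothesis is precisely what the previous corollary supplies, and the conclusion follows.

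Two points need checking. First, why the $S_n$-invariant form of the input suffices, and why the argument is characteristic-free. The F-curves of $\overline{\mathcal{M}}_g$ all lie in $\partial\overline{\mathcal{M}}_g$, and by \cite{GKM02} a divisor on $\overline{\mathcal{M}}_g$ (for $g \ge 1$) is nef if and only if its restriction to every boundary divisor is nef---a statement resting on Diaz's bound on complete subvarieties of $\mathcal{M}_g$ and on elementary intersection theory with F-curves. Since each boundary divisor is a finite quotient of a product of moduli spaces $\overline{\mathcal{M}}_{g_i,n_i}$ with smaller $2g_i - 2 + n_i$, an induction on this quantity reduces the question to the genus zero spaces $\Mznb$ with $n \le g$; the gluing maps identify pairs of marked points symmetrically, which is exactly why only the $S_n$-invariant statement for $\Mznb$ is consumed rather than the full F-conjecture. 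The base cases, which in \cite{GKM02} were restricted to characteristic zero because they relied on Keel--McKernan, are now available over any algebraically closed field by the previous corollary for $n \le \checkedn$.

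The main obstacle is thus not geometric but a matter of bookkeeping: one must confirm that each ingredient of \cite[Theorem 0.3]{GKM02}---Diaz's theorem, the boundary-restriction criterion for nefness, and the inductive reduction to genus zero---is genuinely characteristic-independent, and that the induction for $\overline{\mathcal{M}}_g$ never calls on $\Mznb$ with $n > g$. I would also record explicitly that this corollary claims only the equality of the two cones: the semi-ampleness assertion of Theorem \ref{thm:mainthm} does not transport through the reduction for free and is not being claimed for $\overline{\mathcal{M}}_g$ here.
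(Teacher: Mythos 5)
Your proposal matches the paper's argument, which consists precisely of invoking \cite[Theorem 0.3]{GKM02} to reduce nefness on $\overline{\mathcal{M}}_g$ to the $S_g$-invariant F-conjecture for $\overline{\mathrm{M}}_{0,g}$, supplied by the preceding corollary for $g \le \checkedn$ over any algebraically closed field. Your additional remarks on the characteristic-independence of the reduction and on not claiming semi-ampleness for $\overline{\mathcal{M}}_g$ are sound bookkeeping but do not change the route.
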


By \cite[Theorem 1.1]{CHS09} and \cite[Theorem 1.1]{CS06}, we obtain the nef cone of the moduli space of genus 0 stable maps to a Grassmannian, including the case of projective space. 

\begin{corollary}
Let $k$, $n$, and $d$ be positive integers such that $1 \le k \le n-1$ and $d \le \checkedn$. Let $\overline{\mathrm{M}}_{0,0}(\mathrm{Gr}(k, n), d)$ be the moduli space of genus 0 stable maps to the Grassmannian $\mathrm{Gr}(k, n)$. Over any algebraically closed field, $\nef(\overline{\mathrm{M}}_{0,0}(\mathrm{Gr}(k, n), d))$ coincides with the semi-ample cone, and it is polyhedral with explicit generators. 
\end{corollary}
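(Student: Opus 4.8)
The plan is to reduce the statement to the already–solved case of $\overline{\mathrm{M}}_{0,d}/S_{d}$ and then apply the explicit descriptions of the nef cone of the space of stable maps from \cite[Theorem 1.1]{CHS09} (which treats $\overline{\mathrm{M}}_{0,0}(\PP^{r},d)$, covering the projective-space case) and \cite[Theorem 1.1]{CS06} (which treats a general Grassmannian). Write $Y = \overline{\mathrm{M}}_{0,0}(\mathrm{Gr}(k,n),d)$. Those theorems give a basis of $\mathrm{N}^{1}(Y)$ consisting of the pullback $H$ of the ample generator of $\mathrm{Pic}(\mathrm{Gr}(k,n))$ (the hyperplane class when the target is $\PP^{r}$) together with the boundary divisors of $Y$, and they identify $\nef(Y)$, as a polyhedral cone, in terms of the $S_{d}$-invariant F-nef inequalities on $\overline{\mathrm{M}}_{0,d}$: pulling back along the natural rational map $\varphi\colon Y \dashrightarrow \overline{\mathrm{M}}_{0,d}/S_{d}$ (send a stable map $f$ to the degree-$d$ divisor $f^{*}H_{0}$ on its domain, for $H_{0}$ a general hyperplane, resp.\ Schubert divisor) and adjoining the single extra ray $\RR_{\ge 0}H$ (together with whatever finite list of auxiliary generators is recorded in \cite{CHS09, CS06}) recovers $\nef(Y)$. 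Thus \cite{CHS09, CS06} reduce the determination of $\nef(Y)$, with explicit generators, to that of $\nef(\overline{\mathrm{M}}_{0,d}/S_{d})$.

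Granting this input, the proof runs in three steps. \textbf{Step 1.} Apply the corollary to Theorem~\ref{thm:mainthm} with the number of marked points equal to $d$: since $d \le \checkedn$, over any algebraically closed field $\nef(\overline{\mathrm{M}}_{0,d}/S_{d})$ coincides with its F-nef cone, is polyhedral with the explicit symmetrized generators, and consists entirely of semi-ample classes. By the previous paragraph, $\nef(Y)$ is then polyhedral with an explicit finite generating set: the class $H$, the $\varphi$-pullbacks of the finitely many generators of $\nef(\overline{\mathrm{M}}_{0,d}/S_{d})$, and the auxiliary classes of \cite{CHS09, CS06}. \textbf{Step 2.} Show every generator of $\nef(Y)$ is semi-ample. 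The class $H$ is even base-point free, being pulled back from the projective variety $\mathrm{Gr}(k,n)$ along a morphism; the auxiliary classes are pulled back along natural contractions of $Y$ (e.g.\ the Chow or Hilbert--Chow morphism) and are semi-ample for the same reason; and for a generator of the form $\varphi^{*}L$ with $L$ semi-ample on $\overline{\mathrm{M}}_{0,d}/S_{d}$, one resolves $\varphi$ by a proper birational $\mu\colon \widetilde{Y}\to Y$, notes that $(\varphi\circ\mu)^{*}L$ is semi-ample (pullback along a morphism), and deduces semi-ampleness of $\varphi^{*}L$ on $Y$ from the equality of global sections of multiples on $\widetilde{Y}$ and $Y$ under $\mu$. \textbf{Step 3.} The semi-ample cone of $Y$ is a convex subcone of $\nef(Y)$ that, by Step 2, contains every generator of the polyhedral cone $\nef(Y)$; hence the two cones coincide. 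Together with Step 1 this proves both assertions.

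The main obstacle I expect is Step 2 combined with the passage to arbitrary characteristic. One has to be sure that the structural parts of \cite{CHS09} and \cite{CS06}---the computation of $\mathrm{N}^{1}(Y)$, the construction of $\varphi$, and the identification of $\nef(Y)$---are valid over an arbitrary algebraically closed field rather than only over $\CC$; the Kontsevich space $Y$ and its boundary are well behaved in any characteristic, but the nef-cone computations and the effectivity estimates behind them must be re-examined characteristic-freely, in parallel with the present paper's treatment of $\overline{\mathrm{M}}_{0,n}$ over $\spec\ZZ$. A secondary point is to make the claim ``semi-ampleness is preserved along $\varphi^{*}$'' precise (ensuring $\varphi$ genuinely becomes a morphism on a model dominated by $Y$, and handling the normality and global-sections bookkeeping), and to dispose directly of the finitely many low-degree or small-Picard-rank cases in which the cone description of \cite{CHS09, CS06} degenerates.
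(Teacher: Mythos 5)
Your proposal takes essentially the same approach as the paper: the corollary is obtained there solely by citing \cite[Theorem 1.1]{CHS09} and \cite[Theorem 1.1]{CS06} to reduce $\nef(\overline{\mathrm{M}}_{0,0}(\mathrm{Gr}(k,n),d))$ to the nef (and semi-ample) cone of $\overline{\mathrm{M}}_{0,d}/S_{d}$ together with finitely many extra generators, and then invoking Theorem \ref{thm:mainthm} for $d \le \checkedn$, exactly as you do. The paper records no further detail, so the finer points you flag (transfer of semi-ampleness along the rational map to $\overline{\mathrm{M}}_{0,d}/S_{d}$ and the characteristic-independence of the cited cone computations) are not addressed there either.
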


\subsection{Idea of the proof}

The main ingredient of the proof of Theorem \ref{thm:mainthm} is classical invariant theory, in particular \emph{graphical algebras}. 

A simple but crucial observation in this paper is that an $S_{n}$-invariant F-nef divisor $D$ can be written as $\pi^{*}(cD_{2}) - \sum_{i \ge 3}a_{i}B_{i}$ where $\pi : \Mznb \to (\PP^{1})^{n}\git \SL_{2}$ is a regular contraction and $D_{2}$ is an ample divisor. Then the linear system $|D|$ can be identified with a sub linear system $|cD_{2}|_{\mathbf{a}}$ of $|cD_{2}|$ on $(\PP^{1})^{n}\git \SL_{2}$ (Proposition \ref{prop:generatorsofD}). Thus the study of $|D|$ can be reduced to the study of a non-complete linear system on $(\PP^{1})^{n}\git \SL_{2}$. Furthermore, there is a sub linear system $|cD_{2}|_{\mathbf{a}, G} \subset |cD_{2}|_{\mathbf{a}}$ which can be described combinatorially via \emph{graphical algebra}. The Cox ring of $(\PP^{1})^{n}\git \SL_{2}$, which is the ring of $\SL_{2}$-invariant divisors on $(\PP^{1})^{n}$, is classically known as the graphical algebra since the 19th century. Its generators can be described in terms of finite graphs, thus we may study it by using graph theory. By using the graphical algebra, we obtain a combinatorial description of $|cD_{2}|_{\mathbf{a}, G}$ and its base locus in terms of polytopes (Corollary \ref{cor:polyhedralcone}). 

\subsection{Structure of the paper}

This paper is organized as the following. In Section \ref{sec:graphicalalgebra}, we recall the definition of the graphical algebra. In Section \ref{sec:Gbpf}, we translate the F-conjecture into a feasibility problem. Section \ref{sec:computation} presents the proof of the main theorem, computational results, and some examples.

\subsection*{Acknowledgements}

We would like to thank Maksym Fedorchuk, who kindly pointed out a crucial error on the earlier draft of this paper, and gave many invaluable suggestions. We also thank Young-Hoon Kiem and Ian Morrison for many suggestions and comments.

%%%%%%%%%%%%%%%%%%%%%%%%%%%%%%%%%%%%%

\section{Graphical algebra}\label{sec:graphicalalgebra}

In this section, we recall the definition and basic properties of graphical algebras, which are key algebraic tools in this approach to the $S_{n}$-invariant F-conjecture. We work over $\ZZ$, but all of the results stated here are valid over any base ring. 

The \emph{graphical algebra} is a $\ZZ$-algebra which was introduced to describe a result in classical invariant theory. Consider $(\PP^{1})^{n}$, the space of $n$ points on a projective line. There is a natural diagonal $\SL_{2}$-action on this space, which is induced by a homomorphism $\SL_{2} \to \mathrm{PGL}_{2} \cong \mathrm{Aut}(\PP^{1})$. The graphical algebra is the ring of $\SL_{2}$-invariant multi-homogeneous polynomials. 

Let $[X_{i}: Y_{i}]$ be the homogeneous coordinates of $i$-th factor of $(\PP^{1})^{n}$. It is straightforward to check that $Z_{ij} := (X_{i}Y_{j}-X_{j}Y_{i})$ is an $\SL_{2}$-invariant polynomial, and their products are all invariant polynomials. We can index such polynomials by using finite digraphs. Let $\overrightarrow{\Gamma}$ be a finite directed graph on the vertex set $[n] := \{1, 2, \cdots, n\}$, and let $\Gamma$ be its underlying undirected graph. We allow multiple edges, but a loop is not allowed. For a vertex $i$, the \emph{degree} of $i$ (denoted by $d_{i}$) is the number of edges incident to $i$ regardless of their directions. The \emph{multidegree} of $\overrightarrow{\Gamma}$ is defined by the degree sequence $(d_{1}, d_{2}, \cdots, d_{n})$ and denoted by $\mathbf{deg}\;\overrightarrow{\Gamma}$. Let $V_{\overrightarrow{\Gamma}} = V_{\Gamma}$ be the set of vertices, and let $E_{\overrightarrow{\Gamma}}$ be the set of directed edges. We define $\mathbf{deg}\; \Gamma := \mathbf{deg}\; \overrightarrow{\Gamma}$ and $E_{\Gamma}$ is the set of undirected edges in $\Gamma$. For any $I \subset [n]$, let $w_{I}$ be the number of edges connecting vertices in $I$. For notational simplicity, we set $w_{ij} = w_{\{i, j\}}$, which is the number of edges connecting $i$ and $j$. So $w_{I} = \sum_{i, j \in I}w_{ij}$. For $e \in E_{\overrightarrow{\Gamma}}$, $h(e) \in V_{\overrightarrow{\Gamma}}$ is the head and $t(e) \in V_{\overrightarrow{\Gamma}}$ is the tail. 

For each $\overrightarrow{\Gamma}$, let
\[
	Z_{\overrightarrow{\Gamma}} := 
	\prod_{e \in E(\Gamma)}(X_{t(e)}Y_{h(e)} - X_{h(e)}Y_{t(e)}).
\]
Then 
\[
	Z_{\overrightarrow{\Gamma}} \in \rH^{0}((\PP^{1})^{n}, 
	\cO(\mathbf{deg}\; \overrightarrow{\Gamma}))^{\SL_{2}}.
\]

We define the multiplication $\overrightarrow{\Gamma_{1}} \cdot \overrightarrow{\Gamma_{2}}$ of two graphs $\overrightarrow{\Gamma_{1}}$ and $\overrightarrow{\Gamma_{2}}$ by a graph with the vertex set $[n]$ and $E_{\overrightarrow{\Gamma_{1}} \cdot \overrightarrow{\Gamma_{2}}} := E_{\overrightarrow{\Gamma_{1}}} \sqcup E_{\overrightarrow{\Gamma_{2}}}$. Then $\mathbf{deg}\; \overrightarrow{\Gamma_{1}} \cdot \overrightarrow{\Gamma_{2}} = \mathbf{deg}\; \overrightarrow{\Gamma_{1}} + \mathbf{deg}\;\overrightarrow{\Gamma_{2}}$. Furthermore, 
\[
	Z_{\overrightarrow{\Gamma_{1}}\cdot \overrightarrow{\Gamma_{2}}} = 
	Z_{\overrightarrow{\Gamma_{1}}}\cdot 
	Z_{\overrightarrow{\Gamma_{2}}}.
\]
Note that if we reverse the direction of an edge $e \in E_{\overrightarrow{\Gamma}}$ and make a new graph $\overrightarrow{\Gamma}'$, then $Z_{\overrightarrow{\Gamma}'} = - Z_{\overrightarrow{\Gamma}}$.

The first fundamental theorem of invariant theory (\cite[Theorem 2.1]{Dol03}) says that the ring of $\SL_{2}$-invariants of $(\PP^{1})^{n}$ is generated by the polynomials $Z_{\overrightarrow{\Gamma}}$. 

\begin{definition}\label{def:graphicalalgebra}
The (total) \emph{graphical algebra} $R$ of order $n$ is defined by
\[
	R := \bigoplus_{L \in \mathrm{Pic}((\PP^{1})^{n})}
	\rH^{0}((\PP^{1})^{n}, L)^{\SL_{2}} =
	\bigoplus_{(a_{1}, a_{2}, \cdots, a_{n}) \in \ZZ_{\ge 0}^{n}}
	\rH^{0}((\PP^{1})^{n}, \cO(a_{1} ,a_{2}, \cdots, a_{n}))^{\SL_{2}}.
\]
\end{definition}

The support of $Z_{\overrightarrow{\Gamma}}$ is independent of the direction of each edge. Thus we may denote $\mathrm{Supp}(Z_{\overrightarrow{\Gamma}})$ by $D_{\Gamma}$. This $\SL_{2}$-invariant divisor on $(\PP^{1})^{n}$, or a Weil divisor on $(\PP^{1})^{n}\git \SL_{2}$ is called a \emph{graphical divisor}. The support of $Z_{ij}$ is denoted by $D_{ij}$.

The homogenous coordinate ring of the GIT quotient is a slice of $R$. Fix an effective linearization (a linearization with a nonempty semistable locus) $L \cong \cO(a_{1}, a_{2}, \cdots, a_{n})$. Then the homogeneous coordinate ring of $(\PP^{1})^{n}\git_{L}\SL_{2}$ is 
\[
	R_{L} := \bigoplus_{d \ge 0}\rH^{0}((\PP^{1})^{n}, L^{d})^{\SL_{2}}
	\subset R.
\]
The ideal of relations is explicitly described in \cite{HMSV09a, HMSV12}. 

From now on, we will use the symmetric linearization $\cO(1, 1, \cdots, 1)$ only. In this case, the GIT quotient $(\PP^{1})^{n}\git \SL_{2}$ is a projective variety with a natural $S_{n}$-action permuting the $n$ factors. If $n$ is odd, it is regular. If $n$ is even, there are ${n \choose n/2}/2$ non-regular closed points which are associated to closed orbits of two distinct points with multiplicities $n/2$. 

The generating set of $R_{L}$ has been well-understood since the 19th century by Kempe (\cite{Kem94}). A combinatorial description, including the relation ideal,  is given in \cite{HMSV09a}. We summarize the description here. 

\begin{theorem}[\protect{\cite{Kem94}, \cite[Theorem 2.3]{HMSV09a}}]\label{thm:genofR}
The homogeneous coordinate ring $R_{L}$ is generated by $Z_{\overrightarrow{\Gamma}}$ for $\overrightarrow{\Gamma}$ with $\mathbf{deg}\;\overrightarrow{\Gamma} = (\epsilon, \epsilon, \cdots, \epsilon)$ where $\epsilon = 2$ if $n$ is odd, and $\epsilon = 1$ if $n$ is even. 
\end{theorem}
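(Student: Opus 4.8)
The plan is to prove that $R_L$ is generated in degree one, where "degree one" means the multidegree $(\epsilon,\epsilon,\dots,\epsilon)$ with $\epsilon = 2$ for $n$ odd and $\epsilon=1$ for $n$ even. Since $R_L = \bigoplus_{d\ge 0}\rH^0((\PP^1)^n, L^d)^{\SL_2}$ and $L^d = \cO(d\epsilon,\dots,d\epsilon)$, the first fundamental theorem (already quoted) tells us $R_L$ is spanned by the $Z_{\overrightarrow\Gamma}$ for graphs $\Gamma$ of constant degree $d\epsilon$ at every vertex, i.e.\ $d\epsilon$-regular graphs on $[n]$. So the statement to prove is: \emph{every $d\epsilon$-regular (multi)graph $\Gamma$ on $[n]$ has $Z_{\overrightarrow\Gamma}$ lying in the subalgebra generated by the $\epsilon$-regular graphs.} The natural strategy is induction on $d$: it suffices to show that any $d\epsilon$-regular graph $\Gamma$ with $d\ge 2$ can be written, modulo the Plücker-type relations among the $Z_{\overrightarrow\Gamma}$, as a $\ZZ$-linear combination of products $Z_{\overrightarrow{\Gamma'}}\cdot Z_{\overrightarrow{\Gamma''}}$ with $\Gamma'$ an $\epsilon$-regular graph and $\Gamma''$ a $(d-1)\epsilon$-regular graph.

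The first key step is purely combinatorial: a $d\epsilon$-regular multigraph on $[n]$ decomposes as an edge-disjoint union of an $\epsilon$-regular graph and a $(d-1)\epsilon$-regular graph. For $\epsilon=2$ ($n$ odd), every $2d$-regular graph has an Eulerian orientation, and classical results (Petersen's $2$-factor theorem) give a decomposition into $d$ edge-disjoint $2$-regular spanning subgraphs; peeling off one $2$-factor leaves a $(2d-2)$-regular graph. For $\epsilon=1$ ($n$ even), a $d$-regular graph need not have a perfect matching in general, but here we are free to \emph{choose} which $d\epsilon$-regular graph represents a given monomial up to the relations; the cleaner route is to first reduce, using the Plücker relations $Z_{ij}Z_{kl} - Z_{ik}Z_{jl} + Z_{il}Z_{jk} = 0$, to graphs of a convenient normal form (for instance graphs built from a fixed set of perfect matchings or near-$2$-factors), and only then split off a degree-one piece. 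I would follow the bookkeeping in \cite{HMSV09a} here rather than reprove it from scratch.

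The second key step — and the one I expect to be the main obstacle — is handling the \emph{relations}: even once we know $\Gamma = \Gamma' \sqcup \Gamma''$ as graphs so that $Z_{\overrightarrow\Gamma} = Z_{\overrightarrow{\Gamma'}}\cdot Z_{\overrightarrow{\Gamma''}}$ is literally a product, we must know that $Z_{\overrightarrow{\Gamma''}}$, which has multidegree $(d-1)\epsilon$ at each vertex, actually lies in the \emph{subalgebra generated in degree one}, which is exactly the inductive hypothesis — so this part is automatic \emph{provided} the combinatorial decomposition in the first step always goes through. The real difficulty is therefore ensuring the decomposition exists for \emph{every} regular multigraph, including those with no perfect matching when $\epsilon=1$; this is where one genuinely needs the relations, to rewrite a "bad" graph as a combination of graphs that do decompose. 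Concretely: if $\Gamma$ is $d$-regular with $n$ even but has no perfect matching, one uses a Plücker relation on a suitable $4$-cycle or path in $\Gamma$ to trade $Z_{\overrightarrow\Gamma}$ for $Z$'s of two other $d$-regular graphs, and shows by a potential/termination argument (e.g.\ ordering edges lexicographically and checking the rewriting strictly decreases the order) that iterating reaches graphs containing a perfect matching.

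Finally, I would close the argument by induction: the base case $d=1$ is the definition of the generating set, and the inductive step combines the decomposition (after the relation-rewriting normal form) with the fact that $Z_{\overrightarrow{\Gamma_1}\cdot\overrightarrow{\Gamma_2}} = Z_{\overrightarrow{\Gamma_1}}Z_{\overrightarrow{\Gamma_2}}$. Since all the intermediate identities (first fundamental theorem, Plücker relations) hold over $\ZZ$ and the combinatorial decompositions are integral, the conclusion holds over any base ring, as claimed. The cleanest exposition, and the one I would adopt, is simply to cite \cite[Theorem 2.3]{HMSV09a} for the relation-handling and present the $2$-factor/matching decomposition as the conceptual heart of the proof.
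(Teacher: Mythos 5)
The paper does not prove this statement at all: it is quoted as a classical theorem of Kempe, with \cite{Kem94} and \cite[Theorem~2.3]{HMSV09a} as the references, so there is no in-paper argument to compare against. Your sketch is a correct outline of that classical proof, and your division of labor matches it: for $n$ odd, Petersen's $2$-factorization theorem really does decompose every $2d$-regular loopless multigraph into $d$ edge-disjoint $2$-factors, so $Z_{\overrightarrow{\Gamma}}$ is \emph{literally} a product of degree-$(2,\dots,2)$ generators and no relations are needed; for $n$ even, the relations are genuinely required (two disjoint triangles on $6$ vertices give a $2$-regular graph with no perfect matching), and the Kempe/HMSV argument is exactly the rewriting procedure you describe. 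Two small imprecisions are worth flagging. First, the Pl\"ucker relation $Z_{ij}Z_{kl}=Z_{ik}Z_{jl}+Z_{il}Z_{jk}$ is applied to a pair of \emph{disjoint edges}, not to a ``$4$-cycle or path''; the standard bookkeeping places the vertices on a circle and uncrosses a pair of crossing chords, with termination controlled by the number of crossings (or total chord length) rather than a lexicographic order on edges, and the endpoint of the rewriting is a \emph{non-crossing} regular graph, which visibly splits into non-crossing perfect matchings. Second, your phrasing ``modulo the relations'' should be read as an identity in $R_L$ itself (the Pl\"ucker identity holds among the actual polynomials $Z_{\overrightarrow{\Gamma}}$), which is why the argument is integral and base-ring independent as you claim. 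With those adjustments your induction on $d$ closes correctly; since you ultimately defer the uncrossing details to \cite{HMSV09a}, your proof is no less complete than the paper's own treatment, which consists of the citation alone.
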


Let $e_{i}$ be the $i$-th standard vector in $\ZZ^{n} \cong \mathrm{Pic}((\PP^{1})^{n})$. Each $D_{ij}$ on $(\PP^{1})^{n}\git \SL_{2}$ is the image of $V(Z_{ij})$ in $(\PP^{1})^{n}$ and $Z_{ij} \in \rH^{0}(\cO(e_{i}+e_{j}))$. Thus $\mathrm{Cl}((\PP^{1})^{n}\git \SL_{2})$ is identified with an index two sub-lattice of $\mathrm{Pic}((\PP^{1})^{n})$, generated by $\deg D_{ij} = e_{i} + e_{j}$. Note that a generator $D_{ij}$ of $\mathrm{Cl}((\PP^{1})^{n}\git \SL_{2})$ has a simple moduli theoretic interpretation. Indeed, 
\[
	D_{ij} = \{(p_{1}, p_{2}, \cdots, p_{n}) \in (\PP^{1})^{n}\git \SL_{2}
	 \;|\; p_{i} = p_{j}\}. 
\]
Let $D_{2} = \sum D_{ij}$. 

\begin{lemma}\label{lem:Picgenerator}
The $S_{n}$-invariant Picard group $\mathrm{Pic}((\PP^{1})^{n}\git \SL_{2})^{S_{n}}$ is generated by $\frac{1}{n-1}D_{2}$ (resp. $\frac{2}{n-1}D_{2}$) when $n$ is even (resp. odd). 
\end{lemma}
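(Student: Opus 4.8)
The statement asserts that the rank-one group $\mathrm{Pic}((\PP^{1})^{n}\git\SL_{2})^{S_{n}}$ is generated by $\frac{1}{n-1}D_{2}$ when $n$ is even and $\frac{2}{n-1}D_{2}$ when $n$ is odd. The strategy is to first compute $\mathrm{Cl}((\PP^{1})^{n}\git\SL_{2})^{S_{n}}$ (which is easy, since we already know $\mathrm{Cl}((\PP^{1})^{n}\git\SL_{2})$ is the index-two sublattice of $\ZZ^{n}$ spanned by the vectors $e_{i}+e_{j}$), and then identify the Cartier subgroup inside it. Averaging over $S_{n}$ shows that an $S_{n}$-invariant Weil divisor class is a multiple of the class of $D_{2}=\sum_{i<j}D_{ij}$, whose degree vector is $(n-1,n-1,\dots,n-1)$; so $\mathrm{Cl}((\PP^{1})^{n}\git\SL_{2})^{S_{n}}$ is generated by the smallest rational multiple $cD_{2}$ whose degree vector $(c(n-1),\dots,c(n-1))$ lies in the sublattice $\mathrm{Cl}$. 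Since that sublattice consists of integer vectors with even coordinate-sum, $c(n-1)$ must be an integer and $nc(n-1)$ must be even; for $n$ odd this forces $c(n-1)$ even, i.e. $c=\frac{2}{n-1}$ is the generator of the Weil class group's invariant part, while for $n$ even we only need $c(n-1)\in\ZZ$, giving $c=\frac{1}{n-1}$.

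**Passing from Weil to Cartier.** The second and more delicate step is to check that these generators of $\mathrm{Cl}^{S_{n}}$ are actually Cartier, so that $\mathrm{Pic}^{S_{n}}=\mathrm{Cl}^{S_{n}}$. For $n$ odd the quotient is a regular (smooth) projective variety, so $\mathrm{Pic}=\mathrm{Cl}$ and there is nothing to prove beyond the lattice computation. For $n$ even the quotient has the $\binom{n}{n/2}/2$ isolated singular points coming from the strictly-semistable locus, and one must verify that the class $\frac{1}{n-1}D_{2}$ is locally principal at each such point. The cleanest way is to exhibit a line bundle: $\frac{1}{n-1}D_{2}$ should be represented (up to linear equivalence) by $D_{ij}$ for any pair — indeed $(n-1)D_{ij}$ and $D_{2}$ both have degree vector $(n-1,\dots,n-1)$ on $(\PP^{1})^{n}$, and by the first fundamental theorem / Kempe's theorem the corresponding invariant sections generate the same degree-$(n-1)$ piece up to the known relations, giving $(n-1)D_{ij}\sim D_{2}$; but $D_{ij}$ is visibly Cartier away from the locus where $p_{i}=p_{j}$, and the singular points (two distinct values each with multiplicity $n/2$) avoid many of the $D_{ij}$, so covering the singular locus by suitable affine charts and choosing a pair $\{i,j\}$ not both in the support on each shows $D_{ij}$ is locally principal there. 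Hence $\frac{1}{n-1}D_{2}\sim D_{ij}$ is Cartier and generates $\mathrm{Pic}^{S_{n}}$ (one may then symmetrize, or note $\mathrm{Pic}^{S_{n}}$ has rank one so any generator works).

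**Main obstacle.** The genuine point of difficulty is the even case: showing that the putative generator is Cartier at the isolated singularities, equivalently ruling out that one actually needs $\frac{2}{n-1}D_{2}$ there as well. I expect to handle this either by the explicit local model of the singularity (an affine cone over a projective variety under the $\SL_{2}$-action restricted to the orbit of $\{0^{n/2},\infty^{n/2}\}$, which has a concrete toric-like description) or, more efficiently, by the linear-equivalence argument $(n-1)D_{ij}\sim D_{2}$ above together with the observation that through each singular point one can route a pair $\{i,j\}$ with $p_{i}\ne p_{j}$, so $D_{ij}$ is a local trivialization there. Everything else — the $S_{n}$-averaging, the degree-vector bookkeeping, the parity distinction between $n$ even and odd — is routine linear algebra over $\ZZ$ once the Weil class group is in hand from the preceding paragraph of the paper.
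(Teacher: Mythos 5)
Your computation of $\mathrm{Cl}((\PP^{1})^{n}\git\SL_{2})^{S_{n}}$ is fine: granting the identification of the class group with the index-two sublattice $\{v\in\ZZ^{n}:\sum v_{i}\ \text{even}\}$, the parity bookkeeping correctly produces $\tfrac{1}{n-1}D_{2}$ ($n$ even) and $\tfrac{2}{n-1}D_{2}$ ($n$ odd), and for $n$ odd the quotient is regular, so $\mathrm{Pic}=\mathrm{Cl}$ and you are done. The gap sits exactly where you flagged it, in the Cartier verification for $n$ even, and the key lemma you propose there is false. The class group is multigraded by $\ZZ^{n}$ under this identification: $D_{ij}$ has multidegree $e_{i}+e_{j}$, so $(n-1)D_{ij}$ has multidegree $(n-1)(e_{i}+e_{j})$, not $(n-1,\dots,n-1)$, and hence $(n-1)D_{ij}\not\sim D_{2}$ for $n>2$; no appeal to Kempe's theorem can equate sections living in different multigraded pieces. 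Worse, $D_{ij}$ is not Cartier on the quotient when $n$ is even, since its class is not a multiple of $(1,\dots,1)$ while every line bundle on the quotient is symmetric. Your local argument only shows that for each singular point $V_{I}$ there is \emph{some} pair $\{i,j\}$ with $V_{I}\notin D_{ij}$; but $Z_{ij}$ spans $\rH^{0}(\cO(e_{i}+e_{j}))^{\SL_{2}}$, so $D_{ij}$ is the unique effective divisor in its class, it passes through every $V_{I}$ with $i,j\in I$, and local principality fails there. So nothing in your argument certifies that the class $\tfrac{1}{n-1}D_{2}$ itself is locally principal at the singularities.

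The repair within your strategy is to replace $(n-1)D_{ij}$ by a perfect-matching divisor. For $n$ even and $\Gamma$ a perfect matching on $[n]$, $D_{\Gamma}$ has multidegree $(1,\dots,1)$, so $D_{\Gamma}\sim\tfrac{1}{n-1}D_{2}$ in $\mathrm{Cl}$, and any two perfect matchings give linearly equivalent divisors (the ratio $Z_{\Gamma}/Z_{\Gamma'}$ is an invariant rational function). Given a singular point $V_{I}$ with $|I|=n/2$, choose a matching $\Gamma_{I}$ pairing each element of $I$ with one of $I^{c}$; then $Z_{\Gamma_{I}}$ is nonzero on the corresponding closed orbit, so $V_{I}\notin D_{\Gamma_{I}}$ and the class is locally principal at $V_{I}$. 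That closes the gap. For comparison, the paper avoids the Weil--Cartier passage entirely: it computes $\mathrm{Pic}$ of the quotient directly by Kempf's descent lemma, where $\cO(a_{1},\dots,a_{n})$ descends iff $\sum a_{i}$ is even for $n$ odd, with the extra condition $\sum_{i\in I}a_{i}=\sum_{i\notin I}a_{i}$ for $|I|=n/2$ when $n$ is even (forced by the positive-dimensional stabilizers of the strictly semistable closed orbits), and then reads off the generator from $\cO(D_{2})=\overline{\cO }(n-1,\dots,n-1)$.
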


\begin{proof}
On $(\PP^{1})^{n}\git \SL_{2}$, we denote the descent of the line bundle $\cO(a_{1}, a_{2}, \cdots, a_{n})$ on $(\PP^{1})^{n}$ by $\overline{\cO}(a_{1}, a_{2}, \cdots, a_{n})$. By Kempf's descent lemma (\cite[Theorem 2.3]{DN89}), we are able to check when a line bundle on $(\PP^{1})^{n}$ descends to $(\PP^{1})^{n}\git \SL_{2}$. If $n$ is odd, it descends if and only if $\sum a_{i}$ is even. If $n$ is even, there is one extra constraint: For any $I \subset [n]$ with $|I| = n/2$, $\sum_{i \in I}a_{i} = \sum_{i \notin I}a_{i}$. The only line bundles satisfying this condition are the symmetric bundles $\cO(a, a, \cdots, a)$. Therefore if $n$ is odd, $\mathrm{Pic}((\PP^{1})^{n}\git \SL_{2})$ is isomorphic to an index two sub-lattice of $\mathrm{Pic}((\PP^{1})^{n}) \cong \ZZ^{n}$. If $n$ is even, $\mathrm{Pic}((\PP^{1})^{n}\git \SL_{2}) \cong \ZZ$. In particular, $\overline{\cO}(1, 1, \cdots, 1)$ (resp. $\overline{\cO}(2, 2, \cdots, 2)$) is an integral generator of $\mathrm{Pic}((\PP^{1})^{n}\git \SL_{2})^{S_{n}} \cong \ZZ$ when $n$ is even (resp. odd). Because $\cO(D_{2}) = \overline{\cO}(n-1, n-1, \cdots, n-1)$, we obtain the desired result. 
\end{proof}

%%%%%%%%%%%%%%%%%%%%%%%%%%%%%%%%%%%%%

\section{$G$-base-point-freeness}
\label{sec:Gbpf}

In this section, by using graphical algebra, we translate the $S_{n}$-invariant F-conjecture to a feasibility problem. We work over $\spec \ZZ$, unless there is an explicit assumption on the base scheme.

The following result explains an explicit connection between $\Mznb$ and $(\PP^{1})^{n}\git \SL_{2}$.

\begin{theorem}[\protect{\cite{Kap93b}, \cite[Theorem 4.1 and 8.3]{Has03}}]
There is a birational contraction morphism 
\[
	\pi : \Mznb \to (\PP^{1})^{n}\git\SL_{2}.
\]
\end{theorem}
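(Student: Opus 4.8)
The plan is to construct the morphism $\pi$ explicitly using the linear system associated to the graphical algebra, identifying $(\PP^1)^n\git\SL_2$ with a projective model of $\Mznb$ via the Kapranov-style construction.

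First I would recall that $\Mznb$ can be realized (following Kapranov) inside a product of projective spaces, or more usefully here, via a morphism determined by a semi-ample divisor class. The key observation is that the symmetric $\SL_2$-linearization $\cO(1,1,\dots,1)$ on $(\PP^1)^n$ (for $n$ even; $\cO(2,\dots,2)$ for $n$ odd) produces a GIT quotient whose homogeneous coordinate ring is $R_L$, generated in a single degree by Theorem \ref{thm:genofR}. On the $\Mznb$ side, each generator $Z_{\overrightarrow{\Gamma}}$ pulls back: an invariant section of $\cO(\mathbf{deg}\,\overrightarrow\Gamma)$ on $(\PP^1)^n$ corresponds, under the rational map $\Mznb \dashrightarrow (\PP^1)^n$ that forgets the stabilization data (sending a stable curve to its $n$ marked points read off on a fixed $\PP^1$ component, after an appropriate $\SL_2$-normalization), to a section of a boundary-twisted line bundle on $\Mznb$. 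So I would define the candidate morphism $\pi$ by the linear system spanned by these pulled-back sections and check it is a morphism, i.e. that the sections have no common zero on $\Mznb$.

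The key steps, in order: (1) Fix the identification $\mathrm{Cl}((\PP^1)^n\git\SL_2) \cong$ an index-two sublattice of $\mathrm{Pic}((\PP^1)^n)$ and, via the standard presentation of $\Mznb$ (e.g. $\Mznb \cong \overline{\mathrm M}_{0,A}$ for the symmetric weight datum, by \cite{Has03}), realize the forgetful/reduction map $\rho:\overline{\mathrm M}_{0,n} \to \overline{\mathrm M}_{0,A}$; for the symmetric GIT weights the target Hassett space is exactly $(\PP^1)^n\git\SL_2$. (2) Identify the divisor class on $\Mznb$ that is contracted: $\rho$ (equivalently $\pi$) contracts precisely the boundary divisors $B_I$ with $|I|\ge 3$ and $|I^c|\ge 3$, since those are the curves whose marked points collide in the quotient — this is Hassett's description of which components of the boundary get blown down when one lowers the weights to the GIT/democratic chamber. (3) Verify $\pi$ is a morphism (not merely a rational map): this follows because the Hassett reduction morphisms $\overline{\mathrm M}_{0,A}\to\overline{\mathrm M}_{0,A'}$ for $A\ge A'$ are genuine morphisms \cite[Theorem 4.1]{Has03}, and for the democratic symmetric weight $A' = (2/n+\epsilon,\dots)$ the target is the GIT quotient \cite[Theorem 8.3]{Has03}. (4) Confirm birationality: both spaces contain $\mathrm M_{0,n}$ as a dense open subset and $\pi$ restricts to the identity there, since a configuration of $n$ distinct points on $\PP^1$ is GIT-stable for the symmetric linearization. (5) Conclude $\pi$ is a birational contraction in the technical sense (a birational morphism with connected fibers, or at least with the exceptional locus of codimension $\ge 1$), which is immediate from the Stein factorization / normality of $\Mznb$ together with the fact that $(\PP^1)^n\git\SL_2$ is normal.

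The main obstacle — and the reason this is cited rather than proved from scratch — is step (3) together with step (2): showing that the natural rational map actually extends to a morphism and pinning down exactly which boundary divisors are contracted. Establishing that the GIT quotient is the correct target of a Hassett reduction requires matching the democratic weight chamber with the symmetric GIT stability condition (so that a point configuration is stable iff no point has multiplicity $\ge n/2$), and then invoking Hassett's theorem that lowering weights within and across chambers gives birational morphisms; the subtlety for $n$ even is the locus of points with exactly two values each of multiplicity $n/2$, which becomes the finite set of non-regular (quotient-singular) points of $(\PP^1)^n\git\SL_2$ noted after Theorem \ref{thm:genofR}. Since both \cite{Kap93b} and \cite{Has03} supply exactly these statements, I would simply assemble the citation: Kapranov's construction gives one model, Hassett's Theorems 4.1 and 8.3 identify the GIT quotient as a weighted moduli space and produce the reduction morphism, and the composition is the desired $\pi$.
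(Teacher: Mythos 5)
The paper offers no proof of this statement---it is imported verbatim from the cited sources---and your sketch correctly reconstructs exactly the argument those sources supply: identify $(\PP^{1})^{n}\git\SL_{2}$ with the Hassett space for the small symmetric weights, obtain $\pi$ as the weight-reduction morphism of \cite[Theorem 4.1]{Has03} (with \cite[Theorem 8.3]{Has03} identifying the target), and note birationality on the interior. Your attention to the $n$ even case, where the wall-crossing at weight sum $1$ produces the strictly semistable locus and the quotient singularities, is the right subtlety to flag; nothing further is needed.
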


For any $I \subset [n]$ with $2 \le I \le n/2$, let $B_{I} \subset \Mznb$ be the associated boundary divisor, and let $B_{i} := \sum_{|I| = i}B_{I}$. The image of $B_{ij}:= B_{\{i, j\}}$ is $D_{ij}$. For $I \subset [n]$ with $3 \le |I| < n/2$, $B_{I}$ is contracted by $\pi$, and its image is 
\[
	\pi(B_{I}) = \{(p_{1}, p_{2}, \cdots, p_{n})\;|\; p_{i} = p_{j} 
	\mbox{ for all } i, j \in I\}.
\]
If $p : ((\PP^{1})^{n})^{ss} \to (\PP^{1})^{n}\git \SL_{2}$ is the GIT quotient map, then $\pi(B_{I})$ is the image $p(W_{I})$ of $W_{I} := V(Z_{ij})_{i, j \in I} \subset ((\PP^{1})^{n})^{ss}$. 

When $n$ is even and $|I| = n/2$, $B_{I} = B_{I^{c}}$. Then $\pi(B_{I}) = \pi(B_{I^{c}})$ is an isolated singular point on $(\PP^{1})^{n}\git \SL_{2}$ and the associated closed orbit is 
\[
	\{(p_{1}, p_{2}, \cdots, p_{n})\;|\; p_{i} = p_{j} \mbox{ for all }i, j \in I 
	\mbox{ or } i, j \in I^{c}\}.
\]
Thus $\pi(B_{I})$ is the image $p(W_{I})$ of $W_{I} := V(Z_{ij})_{i, j \in I} \cap V(Z_{ij})_{i, j \in I^{c}}$. We denote $\pi(B_{I}) = p(W_{I})$ by $V_{I}$ for all $I$. 

By Theorem \ref{thm:genofR}, $D_{2}$ is very ample on $(\PP^{1})^{n}\git \SL_{2}$ since $\cO(D_{2}) = \overline{\cO}(n-1, n-1, \cdots, n-1)$. We have 
\begin{equation}\label{eqn:pullback}
	\pi^{*}(D_{2}) = \sum_{i \ge 2}^{\lfloor n/2 \rfloor}{i \choose 2}B_{i}
\end{equation}
(\cite[Lemma 5.3]{KM11}). Since $\pi$ is a regular contraction, the complete linear system $|\pi^{*}(D_{2})|$ is base-point-free on $\Mznb$. Indeed, $\pi^{*}(D_{2})$ is an extremal ray of $\nef(\Mznb)^{S_{n}}$. 

The following is a very simple but important observation. 

\begin{lemma}\label{lem:Fnefexpression}
Every non-trivial $S_{n}$-invariant F-nef $\QQ$-divisor on $\Mznb$ can be written uniquely as 
\[
	\pi^{*}(cD_{2}) - \sum_{i \ge 3}a_{i}B_{i}
\]
for some rational numbers $c > 0$ and $0 \le a_{i} < c{i \choose 2}$. Furthermore, it is integral if and only if $a_{i} \in \ZZ$ and $c \in \frac{1}{n-1}\ZZ$ (resp. $c \in \frac{2}{n-1}\ZZ$) when $n$ is even (resp. $n$ is odd). 
\end{lemma}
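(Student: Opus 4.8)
The plan is to start from the decomposition of the $S_n$-invariant Picard group and combine it with the formula \eqref{eqn:pullback} for $\pi^*(D_2)$. First I would recall that the $S_n$-invariant Néron--Severi space $\mathrm{N}^1(\Mznb)^{S_n}\otimes\QQ$ is spanned by the classes $B_2,B_3,\dots,B_{\lfloor n/2\rfloor}$, so any $S_n$-invariant $\QQ$-divisor $D$ can be written as $D=\sum_{i\ge 2}\beta_i B_i$ for unique rational $\beta_i$. Using \eqref{eqn:pullback}, for any $c\in\QQ$ we have $\pi^*(cD_2)=\sum_{i\ge 2}c\binom{i}{2}B_i$, so setting $c:=\beta_2$ (note $\binom{2}{2}=1$) and $a_i:=c\binom{i}{2}-\beta_i$ for $i\ge 3$ gives the claimed expression $D=\pi^*(cD_2)-\sum_{i\ge 3}a_iB_i$, and the uniqueness of the $\beta_i$ forces uniqueness of $c$ and the $a_i$. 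This is the purely formal part and is essentially immediate.

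The substance is the two inequalities $c>0$ and $0\le a_i<c\binom{i}{2}$, which is where F-nefness enters. The inequality $a_i\ge 0$, i.e. $\beta_i\le c\binom{i}{2}$, and the inequality $a_i<c\binom{i}{2}$, i.e. $\beta_i>0$, together say that $0<\beta_i\le c\binom{i}{2}$ for $i\ge 3$ and $\beta_2=c>0$. To get these I would intersect $D$ with suitable $S_n$-invariant combinations of F-curves, using the explicit intersection numbers $B_i\cdot F$ from \cite[(0.14)]{GKM02}. Concretely, testing $D$ against the F-curve $F_{1,1,1,n-3}$ (the one whose four ``legs'' have $1,1,1,n-3$ marked points) isolates the condition that the coefficient $\beta_2$ — equivalently $c$ — is positive, after noting that an F-nef divisor which is a nonzero effective sum of boundary cannot have $c=0$ (if $c=0$ then $D=-\sum_{i\ge3}a_iB_i$ would be anti-effective, contradicting that $D$ is a nontrivial effective boundary divisor unless all $a_i=0$, i.e. $D=0$). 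For the strict inequality $\beta_i>0$ and the bound $\beta_i\le c\binom{i}{2}$, I would test against the family of F-curves $F_{1,i-1,1,n-i-1}$ and $F_{1,\,i,\,1,\,n-i-2}$ and take appropriate positive linear combinations; the relevant intersection formula gives $D\cdot F_{a,b,c,d}$ as a signed sum of the $\beta$'s, and F-nefness ($\ge 0$ for every F-curve) yields exactly the stated chain of inequalities by an inductive argument on $i$. Since $D$ is assumed effective and written as $\sum b_I B_I$ with $b_I\ge 0$, the weak inequality $a_i\ge0$ can alternatively be read off directly from $\beta_i = c\binom{i}{2}-a_i$ once one knows $a_i$ is a non-negative combination of the $b_I$'s; I would keep both routes in mind and use whichever is cleaner.

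Finally, for the integrality statement, I would argue that $D$ integral means $D\in\mathrm{Pic}(\Mznb)^{S_n}$, which (since $\pi$ is a birational contraction and the $B_i$ span an integral sublattice together with $\pi^*\mathrm{Pic}$) is equivalent to $a_i\in\ZZ$ for all $i\ge 3$ together with $cD_2$ being an integral $S_n$-invariant class on $(\PP^1)^n\git\SL_2$; by Lemma \ref{lem:Picgenerator} the latter says precisely $c\in\frac{1}{n-1}\ZZ$ when $n$ is even and $c\in\frac{2}{n-1}\ZZ$ when $n$ is odd. One subtlety to handle carefully: $\pi^*$ need not be surjective onto $\mathrm{Pic}(\Mznb)^{S_n}$, so I must check that no additional torsion or index-$2$ phenomenon intervenes — concretely, that $\{\,\pi^*\pi_*(\text{integral})\,\}$ together with the $B_i$, $i\ge3$, generates all of $\mathrm{Pic}(\Mznb)^{S_n}$.

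I expect the main obstacle to be bookkeeping the intersection numbers $D\cdot F$ for all the relevant F-curves and organizing the induction that simultaneously yields $\beta_i>0$ and $\beta_i\le c\binom{i}{2}$ for every $i$ in the range $3\le i\le\lfloor n/2\rfloor$; the even case $|I|=n/2$ needs a small separate check because of the identification $B_I=B_{I^c}$.
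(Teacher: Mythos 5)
Your formal decomposition step, your derivation of $a_i\ge 0$ from the F-curves with two singleton legs (the convexity of the sequence $\{a_i\}$ with $a_1=a_2=0$), and your treatment of integrality via Lemma \ref{lem:Picgenerator} all match the paper's proof. Your argument for $c>0$ (convexity plus effectivity forces $D=0$ when $c=0$) is also fine. The genuine gap is the strict upper bound $a_i<c{i \choose 2}$, equivalently the strict positivity $\beta_i>0$ of every coefficient in $D=\sum_{i\ge 2}\beta_iB_i$. The F-curves you name, $F_{1,i-1,1,n-i-1}$ and $F_{1,i,1,n-i-2}$, are exactly the contracted ones; they yield only the second-difference inequalities $\beta_j+\beta_{j+2}-2\beta_{j+1}\le c$, and these together with effectivity ($\beta_i\ge 0$) do \emph{not} imply $\beta_i>0$ for $i\ge 3$. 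Concretely, $D=B_2$ (so $\beta_2=1$, $\beta_i=0$ for $i\ge 3$) is effective and satisfies every one of those inequalities, yet violates $a_3<3c$; it is only ruled out by F-curves with larger legs, e.g.\ $F_{2,2,2,n-6}\cdot B_2=-3$. So no ``appropriate positive linear combination'' of the curves you list can produce the needed strict inequalities, and in any case a strict open condition cannot follow from a closed system of linear inequalities without an extra input. The paper's extra input is \cite[Theorem 1.3]{KM13}: every non-trivial $S_n$-invariant F-nef divisor is \emph{big}, and $\eff(\Mznb)^{S_n}$ is simplicial with the $B_i$ as generators, so bigness means lying in the interior of that orthant, i.e.\ all $\beta_i>0$. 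You either need to cite that result, as the paper does, or reprove it using the full set of F-curves, which is a substantially harder combinatorial task than your sketch suggests.

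Two smaller points. Your ``alternative'' route to $a_i\ge 0$ via effectivity is not correct: $a_i\ge 0$ is the upper bound $\beta_i\le c{i\choose 2}$, which has nothing to do with the non-negativity of the boundary coefficients; stick with the convexity argument. And for the integrality claim, the concern you raise about $\pi^*$ is resolved by noting that the $a_i$ are integers whenever $D$ is integral (each is an integral combination of intersection numbers $D\cdot F_j$ with the contracted F-curves), after which $\pi^*(cD_2)=D+\sum a_iB_i$ is integral and Lemma \ref{lem:Picgenerator} pins down $c$.
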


For notational simplicity, we set $a_{1} = a_{2} = 0$. 

\begin{proof}
Since F-nefness is defined formally, it is sufficient to prove the result over any algebraically closed field. In $\mathrm{N}^{1}(\Mznb)^{S_{n}}$, by \cite[Theorem 1.3]{KM13}, $\{B_{i}\}$ forms a $\QQ$-basis. So is $\{\pi^{*}(D_{2}), B_{i}\}_{3 \le i \le \lfloor n/2 \rfloor}$. Thus we have the existence and the uniqueness of the expression. Since $\mathrm{Eff}(\Mznb)^{S_{n}}$ is generated by $B_{i}$ for $2 \le i \le \lfloor n/2 \rfloor$ and every $S_{n}$-invariant F-nef divisor is big (\cite[Theorem 1.3]{KM13}), an $S_{n}$-invariant F-nef divisor is a strictly positive linear combination of $B_{i}$'s. From \eqref{eqn:pullback}, $a_{i} < c{i \choose 2}$ and $c > 0$. Let $F_{j}$ be any F-curve class whose associated partition has parts $\{1, 1, j, n-2-j\}$ for $1 \le j \le \lfloor n/2 \rfloor - 2$. Then since $F_{j}$ is contracted by $\pi$, 
\[
	0 \le F_{j} \cdot (\pi^{*}(cD_{2}) - \sum_{i \ge 3}a_{i}B_{i}) 
	= a_{j} + a_{j+2} - 2a_{j+1},
\]
so the sequence $\{a_{j}\}$ is convex. From $a_{1} = a_{2}= 0$, inductively we obtain $a_{i} \ge 0$ for all $i$.

The last assertion follows from Lemma \ref{lem:Picgenerator}, since each $B_{i}$ are all integral. 
\end{proof}

\begin{remark}
A natural question is whether the computational approach in this paper extends to non-symmetric F-nef divisors. It is unclear, at least to the authors, that every (non-necessarily $S_{n}$-invariant) F-nef divisor on $\Mznb$ can be written as a sum of divisors of the form in Lemma \ref{lem:Fnefexpression}.
\end{remark}

\begin{definition}
For a non-trivial integral $S_{n}$-invariant F-nef divisor $D = \pi^{*}(cD_{2}) - \sum_{i \ge 3}a_{i}B_{i}$, let $|cD_{2}|_{\mathbf{a}}$ be the sub linear system of $|cD_{2}|$ on $(\PP^{1})^{n}\git \SL_{2}$ consisting of the divisors whose multiplicity along $V_{I}$ is at least $a_{|I|}$.
\end{definition}

\begin{lemma}
Let $D = \pi^{*}(cD_{2}) - \sum_{i \ge 3}a_{i}B_{i}$ be an integral $S_{n}$-invariant F-nef divisor. The complete linear system $|D|$ is identified with $|cD_{2}|_{\mathbf{a}}$ on $(\PP^{1})^{n}\git \SL_{2}$.
\end{lemma}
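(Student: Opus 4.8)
The plan is to identify $\rH^0(\Mznb, \cO(D))$ with a subspace of $\rH^0((\PP^1)^n\git\SL_2, \cO(cD_2))$ via the pullback map $\pi^*$, and then show the image is exactly the space of sections cutting out divisors with the prescribed multiplicities along the $V_I$. First I would recall that $\pi : \Mznb \to (\PP^1)^n\git\SL_2$ is a birational contraction morphism, so $\pi_*\cO_{\Mznb} = \cO_{(\PP^1)^n\git\SL_2}$ (this follows because $(\PP^1)^n\git\SL_2$ is normal and $\pi$ is a birational projective morphism; alternatively $\pi$ has connected fibers and the target is normal). Hence by the projection formula, for any line bundle $L$ on $(\PP^1)^n\git\SL_2$ we get $\pi_*\pi^* L = L$ and therefore $\rH^0(\Mznb, \pi^* L) = \rH^0((\PP^1)^n\git\SL_2, L)$. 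Applying this with $L = cD_2$ gives a canonical isomorphism $\rH^0(\Mznb, \pi^*(cD_2)) \cong \rH^0((\PP^1)^n\git\SL_2, cD_2) = |cD_2|$ at the level of linear systems.

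Next I would compare $|D|$ with $|\pi^*(cD_2)|$ inside this common space. Since $D = \pi^*(cD_2) - \sum_{i\ge 3} a_i B_i$ with all $a_i \ge 0$ by Lemma \ref{lem:Fnefexpression}, a section $s \in \rH^0(\Mznb, \pi^*(cD_2))$ lies in the subspace $\rH^0(\Mznb, D) = \rH^0(\Mznb, \pi^*(cD_2) - \sum a_i B_i)$ if and only if $\mathrm{div}(s) \ge \sum_{i\ge 3} a_i B_i$, i.e. the multiplicity of $\mathrm{div}(s)$ along each boundary divisor $B_I$ is at least $a_{|I|}$. Under the identification with sections on $(\PP^1)^n\git\SL_2$, such a section corresponds to some $t \in \rH^0((\PP^1)^n\git\SL_2, cD_2)$ with $s = \pi^* t$; I need to translate the condition $\mathrm{mult}_{B_I}(\mathrm{div}(\pi^* t)) \ge a_{|I|}$ into a condition on $\mathrm{div}(t)$. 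For $2 \le |I| \le n/2$, $\pi$ maps $B_I$ onto $V_I$, and for $3 \le |I| < n/2$ the divisor $B_I$ is $\pi$-exceptional, so the relevant comparison is between the order of vanishing of $t$ along $V_I$ and the order of vanishing of $\pi^* t$ along $B_I$. The key computation is that the discrepancy-type coefficient relating these is exactly $1$ on the nose — more precisely, that $\pi^* V_I$ contains $B_I$ with multiplicity one when $V_I$ is a divisor, and that when $V_I$ has higher codimension (the cases $3 \le |I| < n/2$ and $|I| = n/2$ with $n$ even) the order of vanishing of $\pi^* t$ along $B_I$ equals the order of vanishing of $t$ along $V_I$ in the sense of the valuation associated to $B_I$. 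This lets us conclude $\mathrm{mult}_{B_I}(\mathrm{div}(\pi^* t)) \ge a_{|I|}$ iff $\mathrm{mult}_{V_I}(\mathrm{div}(t)) \ge a_{|I|}$, which is precisely the defining condition of $|cD_2|_{\mathbf{a}}$.

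The main obstacle I anticipate is the last point: making precise the relationship between the vanishing order of $t$ along the (possibly high-codimension, possibly singular) subscheme $V_I \subset (\PP^1)^n\git\SL_2$ and the vanishing order of $\pi^*t$ along the exceptional divisor $B_I$. When $B_I$ is a genuine divisor mapping to a divisor $V_I = D_{ij}$, this is the routine statement that $\pi$ is an isomorphism near the generic point of $B_I$, so the multiplicities match. When $B_I$ is exceptional, one needs that the center of the valuation $\mathrm{ord}_{B_I}$ on $(\PP^1)^n\git\SL_2$ is exactly $V_I$ and that this valuation restricted to functions "detects" exactly the ideal of $V_I$ with the correct normalization; concretely one can work on the blow-up, or use Kapranov's explicit description of $\Mznb$ as an iterated blow-up, to see that $B_I$ arises from blowing up (the strict transform of) $V_I$ and hence $\mathrm{ord}_{B_I}(\pi^* t) = \mathrm{mult}_{V_I}(t)$ after accounting for the order in which the blow-ups are performed. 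A cleaner route, which I would pursue if the blow-up bookkeeping gets heavy, is to use the moduli-theoretic interpretation directly: the local equation of $B_I$ near a generic point corresponds to the node-smoothing parameter, and $\pi^* t$ vanishing to order $k$ along $B_I$ is equivalent to $t$, viewed as a multi-homogeneous $\SL_2$-invariant, vanishing to order $k$ along $W_I \subset ((\PP^1)^n)^{ss}$, which is exactly the condition defining $|cD_2|_{\mathbf{a}}$ by construction. Once this normalization lemma is in hand, the identification of linear systems is immediate from the surjectivity $\rH^0(\Mznb,\pi^*(cD_2)) \twoheadleftarrow \rH^0((\PP^1)^n\git\SL_2, cD_2)$ and the matching of the linear conditions on both sides.
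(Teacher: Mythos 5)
Your proposal is correct and follows essentially the same route as the paper: identify $\rH^{0}(\Mznb, \pi^{*}(cD_{2}))$ with $\rH^{0}((\PP^{1})^{n}\git\SL_{2}, cD_{2})$ via $\pi_{*}\cO_{\Mznb}=\cO$, and match the condition $\mathrm{div}(s)\ge \sum a_{i}B_{i}$ with the multiplicity conditions along the $V_{I}$. You are in fact more explicit than the paper about the one point it leaves implicit (that $\mathrm{ord}_{B_{I}}$ computes $\mathrm{mult}_{V_{I}}$ with coefficient exactly one); just note that in your alternative route via $W_{I}\subset((\PP^{1})^{n})^{ss}$, the case $|I|=n/2$ carries a factor of $2$ (order $2a_{n/2}$ along $W_{I}$ corresponds to order $a_{n/2}$ along $B_{I}$, since $-\mathrm{Id}\in\SL_{2}$ acts nontrivially on the exceptional divisor), exactly as in the proof of Proposition \ref{prop:generatorsofD}.
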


\begin{proof}
Since $D$ is non-trivial, $c > 0$ by Lemma \ref{lem:Fnefexpression}. For any $E \in |cD_{2}|_{\mathbf{a}}$, $\pi^{*}E = E' + \sum_{i \ge 3}a_{i}B_{i}$ and $E' \in |D|$. Thus we have an injective map $|cD_{2}|_{\mathbf{a}} \to |D|$. Any divisor $F \in |D|$ can be written as $F + \sum_{i \ge 3}a_{i}B_{i} = \pi^{*}F'$ for some $F' \in |cD_{2}|$, and from the expression $F' \in |cD_{2}|_{\mathbf{a}}$. Thus we have a map $|D| \to |cD_{2}|_{\mathbf{a}}$. It is straightforward to see that they are inverses of each other.
\end{proof}

\begin{definition}
For a divisor $E \in |cD_{2}|_{\mathbf{a}}$, let $\widetilde{E} \in |D|$ be $\pi^{*}(E) - \sum_{i \ge 3}a_{i}B_{i}$, the divisor identified with $E$ via the isomorphism $|D| \cong |cD_{2}|_{\mathbf{a}}$.
\end{definition}

Note that $\widetilde{E}$ is \emph{not} the proper transform of $E$ in general. 

Although the non complete sub linear system $|cD_{2}|_{\mathbf{a}}$ has the key to understand the base-point-freeness or the semi-ampleness of $|D|$, still it is hard to analyze. We will define a sub linear system of $|cD_{2}|_{\mathbf{a}}$ which can be studied in purely combinatorial terms. 

Recall that for any $I \subset [n]$, $w_{I}$ is the number of edges connecting vertices in $I$. Recall also that for notational simplicity, we set $a_{2} = 0$. 

\begin{proposition}\label{prop:generatorsofD}
Let $D = \pi^{*}(cD_{2}) - \sum_{i \ge 3}a_{i}B_{i}$ be an integral $S_{n}$-invariant F-nef divisor. Let $D_{\Gamma}$ be a graphical divisor associated to a graph $\Gamma$. Then  $D_{\Gamma} \in |cD_{2}|_{\mathbf{a}}$ if and only if
\begin{enumerate}
\item $\mathbf{deg}\;\Gamma = (c(n-1), c(n-1), \cdots, c(n-1))$;
\item For every $I \subset [n]$ with $2 \le |I| \le n/2$, $w_{I} \ge a_{|I|}$;
\end{enumerate}
\end{proposition}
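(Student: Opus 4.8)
The plan is to unwind the definition of $|cD_{2}|_{\mathbf{a}}$ and translate each condition on $D_{\Gamma}$ into a combinatorial statement about $\Gamma$. Recall that $|cD_{2}|$ is a complete linear system with $cD_{2} = \overline{\cO}(c(n-1),\ldots,c(n-1))$, so an effective $\SL_{2}$-invariant divisor in it is exactly $D_{\Gamma}$ for a graph $\Gamma$ whose multidegree is $(c(n-1),\ldots,c(n-1))$. This gives condition (1) at once, and the point is that membership in the \emph{sub}-system $|cD_{2}|_{\mathbf{a}}$ is governed by the multiplicity of $D_{\Gamma}$ along each $V_{I}$, which I claim equals $w_{I}$ (when $|I|< n/2$) and $\min(w_I, w_{I^c})$ (when $|I|=n/2$, noting that for the multidegree to be symmetric one has $w_I = w_{I^c}$ automatically up to the edges crossing the cut). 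So the whole proposition reduces to computing $\mathrm{ord}_{V_{I}} D_{\Gamma}$.

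First I would set up the multiplicity computation on $(\PP^{1})^{n}$ before descending. Since $Z_{\overrightarrow{\Gamma}} = \prod_{e\in E(\Gamma)}(X_{t(e)}Y_{h(e)}-X_{h(e)}Y_{t(e)})$, the divisor $D_\Gamma = \mathrm{Supp}(Z_{\overrightarrow\Gamma})$ restricted near the locus $W_I = V(Z_{ij})_{i,j\in I}$ picks up a factor $Z_{ij}$ exactly from each edge $e$ with both endpoints in $I$; there are $w_I$ such edges counted with multiplicity. The edges with at most one endpoint in $I$ do not vanish generically along $W_I$. Hence the order of vanishing of $Z_{\overrightarrow\Gamma}$ along $W_I$ is $w_I$; by construction $V_I = p(W_I)$, and since $p$ is étale in codimension one over the stable locus (or, when $|I|=n/2$, one handles the isolated point $V_I$ by the same local computation using both $V(Z_{ij})_{i,j\in I}$ and $V(Z_{ij})_{i,j\in I^c}$), the multiplicity of $D_\Gamma$ along $V_I$ is again $w_I$ (resp.\ the appropriate combination). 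Condition (2) of the proposition is then literally the statement $\mathrm{ord}_{V_I} D_\Gamma = w_I \ge a_{|I|}$ for all relevant $I$, which is exactly the defining condition of $|cD_{2}|_{\mathbf{a}}$.

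For the converse direction, suppose $D_\Gamma$ satisfies (1) and (2). Condition (1) guarantees $D_\Gamma \in |cD_2|$ since $Z_{\overrightarrow\Gamma}\in \rH^0((\PP^1)^n,\cO(c(n-1),\ldots,c(n-1)))^{\SL_2}$ and $cD_2$ is this symmetric line bundle on the quotient; condition (2) with the multiplicity formula gives $\mathrm{ord}_{V_I}D_\Gamma = w_I \ge a_{|I|}$, so $D_\Gamma\in |cD_2|_{\mathbf{a}}$. I should double-check that the constraint is only needed for $|I|\le n/2$: for $|I|>n/2$ the locus $V_{I^c}$ with $|I^c|< n/2$ already imposes the relevant inequality (and $a_i$ is only defined for $i\le \lfloor n/2\rfloor$), and the $S_n$-invariance built into the symmetric linearization means no extra conditions appear.

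The step I expect to be the main obstacle is the multiplicity computation at the non-regular (even $n$, $|I|=n/2$) points, where $(\PP^1)^n\git\SL_2$ is singular and one cannot simply quote étale-in-codimension-one; there $V_I$ is a point rather than a divisor, so one must interpret "multiplicity at least $a_{|I|}$" carefully — most cleanly by pulling back to $\Mznb$ (or to a resolution) and computing the order of vanishing along the exceptional divisor $B_I$, using $\pi^*D_2 = \sum_{i\ge2}\binom{i}{2}B_i$ from \eqref{eqn:pullback} to bookkeep how $w_I$ edges through $I$ contribute. A secondary technical point is being careful that "$w_I$ edges inside $I$" is the right local order and not, say, double-counting; this is where writing out $Z_{\overrightarrow\Gamma}$ in an affine chart adapted to $W_I$ makes everything transparent. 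Modulo these local checks, the proposition is a direct dictionary between the geometry of $|cD_2|_{\mathbf a}$ and the combinatorics of $\Gamma$.
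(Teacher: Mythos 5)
Your overall strategy is the same as the paper's: condition (1) is exactly membership in $|cD_{2}|$, and condition (2) is unwound by computing the multiplicity of $D_{\Gamma}$ along $V_{I}$ upstairs on $(\PP^{1})^{n}$, where the order of vanishing of $Z_{\overrightarrow{\Gamma}}$ along $W_{I}$ is visibly the number of edges internal to $I$ (each factor $Z_{ij}$ with $i,j\in I$ vanishes to order one at a general point of $W_{I}$, the others not at all). For $3 \le |I| < n/2$ this matches the paper's proof: a general point of $V_{I}$ is a smooth point of the quotient, so the multiplicity passes down unchanged, and that part of your argument is complete.

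The gap is the case you yourself flag as the obstacle: $n$ even, $|I| = n/2$. Two ingredients are missing. First, your guessed multiplicity $\min(w_{I}, w_{I^{c}})$ is not what the local computation gives: here $W_{I} = V(Z_{ij})_{i,j\in I} \cap V(Z_{ij})_{i,j\in I^{c}}$, since the closed orbit has \emph{both} clusters collapsed, so every edge internal to $I$ and every edge internal to $I^{c}$ contributes, and the multiplicity of $p^{*}(D_{\Gamma})$ along $W_{I}$ is $w_{I} + w_{I^{c}}$. Second, the descent step divides this by $2$: in the Kirwan partial desingularization $\mathrm{Bl}_{W_{I}}(\PP^{1})^{n}\git \SL_{2}$, the residual stabilizer $\{\pm\mathrm{Id}\}$ acts nontrivially on the exceptional divisor $E_{I}$, so it is $2E_{I}$, not $E_{I}$, that descends to $B_{I}$, and multiplicity $m$ along $E_{I}$ upstairs corresponds to multiplicity $m/2$ along $B_{I}$ on $\Mznb$. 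Hence "multiplicity at least $a_{n/2}$ along $V_{I}$" is equivalent to $w_{I} + w_{I^{c}} \ge 2a_{n/2}$, which collapses to condition (2) only after observing that the $c(n-1)$-regularity of $\Gamma$ forces $w_{I} = w_{I^{c}}$ when $|I| = n/2$. Your formula happens to give the right number because of this last identity, but the two compensating factors of $2$ --- the doubled vanishing along $W_{I}$ and the order-two ramification of the quotient along $E_{I}$ --- are the actual content of this case, and neither appears in your sketch; "pull back to $\Mznb$ and compute along $B_{I}$" remains a plan rather than a proof until they are supplied.
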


\begin{proof}
The condition (1) is exactly $D_{\Gamma} \in |cD_{2}|$, because $\cO(cD_{2}) = \overline{\cO}(c(n-1), c(n-1), \cdots, c(n-1))$. Note that for $3 \le |I| < n/2$, a general point of $V_{I}$ is smooth. Thus $D_{\Gamma}$ vanishes on $V_{I}$ with multiplicity at least $a_{|I|}$ if and only if $p^{*}(D_{\Gamma})$ vanishes on $W_{I} = V(Z_{ij})_{i, j \in I}$ with multiplicity at least $a_{|I|}$ if and only if $\Gamma$ has at least $a_{i}$ edges connecting vertices in $I$. 

When $n$ is even and $|I| = n/2$, $V_{I}$ is an isolated singular point and it is the image of $W_{I} = V(Z_{ij})_{i, j \in I} \cap V(Z_{ij})_{i, j \in I^{c}}$ on $((\PP^{1})^{n})^{ss}$. Suppose that $Z_{\Gamma}$ satisfies the following condition: 
\begin{center}
(C) For every $I \subset [n]$ with $|I| = n/2$, $w_{I} + w_{I^{c}} \ge 2a_{n/2}$. 
\end{center}
We claim that this condition is equivalent to the condition that multiplicity along $B_{n/2}$ is at least $a_{n/2}$. 

Consider the following commutative diagram:
\[
	\xymatrix{&(\mathrm{Bl}_{W_{I}}(\PP^{1})^{n})^{s}
	\ar[r]^{q} \ar[d]_{\tilde{p}} &
	 ((\PP^{1})^{n})^{ss}\ar[d]^{p}\\
	\Mznb \ar[r] & 
	\mathrm{Bl}_{W_{I}}(\PP^{1})^{n}\git \SL_{2} \ar[r]^{\bar{q}} &
	(\PP^{1})^{n}\git \SL_{2}}
\]
The vertical arrows are GIT quotients and $q$ is the blow-up along $W_{I}$. The superscript $ss$ (resp. $s$) denotes the semistable (resp. stable) locus. In \cite[Theorem 1.1]{KM11}, it was shown that Hassett's contraction $\pi : \Mznb \to (\PP^{1})^{n}\git \SL_{2}$ is decomposed into $\Mznb \to \mathrm{Bl}_{W_{I}}(\PP^{1})^{n}\git \SL_{2} \stackrel{\bar{q}}{\to} (\PP^{1})^{n}\git \SL_{2}$ and $\bar{q}$ is Kirwan's partial desingularization. 

For any $D_{G} \in |cD_{2}|$, $p^{*}(D_{G})$ is an $\SL_{2}$-invariant divisor on $((\PP^{1})^{n})^{ss}$. Its multiplicity along $W_{I}$ is at least  $2a_{|I|} = 2a_{n/2}$. Thus $q^{*}p^{*}(D_{G})$ has the multiplicity at least $2a_{n/2}$ along the exceptional divisor $E_{I}$. But since $-\mathrm{Id} \in \SL_{2}$ acts on $E_{I}$ nontrivially, $2E_{I}$ descends to $B_{I}$. Therefore the multiplicity along $B_{I}$ on $\mathrm{Bl}_{W_{I}}(\PP^{1})^{n}\git \SL_{2}$, which is equal to the multiplicity along $B_{I}$ on $\Mznb$, is at least $a_{n/2}$. The converse is similar.

Since the degree of each vertex is $c(n-1)$, $w_{I}= cn(n-1)/2 - \sum_{i \in I, j \in I^{c}}w_{ij} = w_{I^{c}}$. Thus we may reduce Condition (C) to (2). 
\end{proof}

\begin{definition}
Let $D = \pi^{*}(cD_{2}) - \sum_{i \ge 3}a_{i}B_{i}$ be a non-trivial $S_{n}$-invariant F-nef divisor with $c, ,a_{i} \in \ZZ$. Let $|cD_{2}|_{\mathbf{a}, G} \subset |cD_{2}|_{\mathbf{a}}$ be the sub linear system generated by $D_{\Gamma}$ satisfying two conditions in Proposition \ref{prop:generatorsofD}. Let $|D|_{G} \subset |D|$ be the sub linear system which is identified with $|cD_{2}|_{\mathbf{a}, G}$ via the identification
\[
	|D| \cong |cD_{2}|_{\mathbf{a}}.
\]
In other words, $|D|_{G}$ is generated by $\widetilde{D}_{\Gamma}$ for $\Gamma$ in Proposition \ref{prop:generatorsofD}.
\end{definition}

\begin{remark}
In general, $|D|_{G}$ is not equal to $|D|$. Equivalently, $|cD_{2}|_{\mathbf{a}, G}$ is not equal to $|cD_{2}|_{\mathbf{a}}$. See Example \ref{ex:bpfnotGbpf}. 
\end{remark}

The following lemma is straightforward.

\begin{lemma}\label{lem:baselocus}
Let $D = \pi^{*}(cD_{2}) - \sum_{i \ge 3}a_{i}B_{i}$ be an integral $S_{n}$-invariant F-nef divisor. Let $B = \bigcap_{I \in T}B_{I}$ be a boundary stratum indexed by a nonempty subset $T \subset \{I \subset [n]\;|\; 2 \le |I| \le \lfloor n/2 \rfloor\}$. Then a general point in $B$ is not in the support of $\widetilde{D}_{\Gamma} \in |D|_{G}$ if and only if for every $I \in T$ with $|I| \le n/2$, $w_{I} = a_{|I|}$.
\end{lemma}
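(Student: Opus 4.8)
The plan is to make $\widetilde{D}_{\Gamma}$ completely explicit as an effective divisor supported on the boundary, read off its multiplicity along each boundary divisor in terms of $w_{I}$, and then conclude using the combinatorics of boundary strata. First I would observe that, writing $Z_{\overrightarrow{\Gamma}} = \pm\prod_{k<l} Z_{kl}^{w_{kl}}$, the graphical divisor on $(\PP^{1})^{n}\git\SL_{2}$ is $D_{\Gamma} = \sum_{k<l} w_{kl}D_{kl}$; in particular it is supported on the complement of the open configuration locus, over which $\pi$ is an isomorphism. Hence $\pi^{*}D_{\Gamma}$, and therefore $\widetilde{D}_{\Gamma} = \pi^{*}D_{\Gamma} - \sum_{i\ge 3}a_{i}B_{i}$, is supported on $\bigcup_{I}B_{I}$, so $\widetilde{D}_{\Gamma} = \sum_{2\le|I|\le\lfloor n/2 \rfloor}(w_{I}-a_{|I|})B_{I}$ provided one checks that $\mathrm{mult}_{B_{I}}(\pi^{*}D_{\Gamma}) = w_{I}$ for every such $I$ (recall $a_{2}=0$, and $w_{I}\ge a_{|I|}$ by condition (2) of Proposition \ref{prop:generatorsofD}, so all coefficients are $\ge 0$, consistent with $\widetilde{D}_{\Gamma}$ being effective). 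For $|I|=2$ this says $D_{ij}$ appears in $D_{\Gamma}$ with multiplicity $w_{ij}$ and $\pi$ is birational along $B_{ij}$. For $3\le|I|<n/2$, the section $Z_{\overrightarrow{\Gamma}}$ vanishes to order exactly $w_{I}$ along the smooth point $V_{I}$ — the sharp form of the computation already carried out in the proof of Proposition \ref{prop:generatorsofD} — and $B_{I}$ maps birationally onto the exceptional divisor of the blow-up of $V_{I}$, so the order is unchanged. For $|I|=n/2$ I would reuse the blow-up diagram and the descent of $2E_{I}$ to $B_{I}$ from that same proof: $p^{*}D_{\Gamma}$ vanishes to order $w_{I}+w_{I^{c}}$ along $W_{I}$, hence $\widetilde{D}_{\Gamma}$ has multiplicity $(w_{I}+w_{I^{c}})/2 = w_{I}$ along $B_{I}$, where $w_{I}=w_{I^{c}}$ because every vertex of $\Gamma$ has degree $c(n-1)$.

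With the formula $\widetilde{D}_{\Gamma} = \sum_{2\le|I|\le\lfloor n/2 \rfloor}(w_{I}-a_{|I|})B_{I}$ in hand, the rest is the boundary combinatorics of $\Mznb$. If $B = \bigcap_{I\in T}B_{I}$ is nonempty then $T$ is a laminar family, $B$ is irreducible, and its generic point $x$ corresponds to a stable curve whose dual graph is the tree determined by $T$; thus $x$ lies on $B_{I}$ for $I\in T$ and on no other boundary divisor. (If $B=\emptyset$ the assertion is vacuous.) Therefore $x\in\mathrm{Supp}(\widetilde{D}_{\Gamma}) = \bigcup_{I:\, w_{I}>a_{|I|}}B_{I}$ if and only if $w_{I}-a_{|I|}>0$ for some $I\in T$, and since $w_{I}\ge a_{|I|}$ always, this is equivalent to $w_{I}\ne a_{|I|}$ for some $I\in T$. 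Every $I\in T$ satisfies $|I|\le\lfloor n/2 \rfloor$, so negating gives exactly the statement of the lemma.

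The only genuinely delicate point — as opposed to bookkeeping — is the sharp multiplicity equality $\mathrm{mult}_{B_{I}}(\pi^{*}D_{\Gamma})=w_{I}$, and within it the even case $|I|=n/2$, where the singular point $V_{I}$ of $(\PP^{1})^{n}\git\SL_{2}$ and the factor of two in Kirwan's partial desingularization must be tracked precisely; the decomposition $D_{\Gamma}=\sum w_{kl}D_{kl}$ and the structure of the boundary strata are routine.
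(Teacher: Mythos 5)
Your proof is correct and follows essentially the same route the paper takes (the paper treats this lemma as straightforward, recording the key fact $\mathrm{Supp}(\widetilde{D}_{\Gamma})=\bigcup_{I:\,w_{I}>a_{|I|}}B_{I}$ in the proof of Lemma \ref{lem:baselocus2} and combining it with the fact that a general point of $\bigcap_{I\in T}B_{I}$ lies on exactly the $B_{I}$ with $I\in T$). You simply make explicit the multiplicity computation $\widetilde{D}_{\Gamma}=\sum(w_{I}-a_{|I|})B_{I}$, including the correct handling of the factor of two at the strata with $|I|=n/2$, which is consistent with the argument in the proof of Proposition \ref{prop:generatorsofD}.
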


\begin{definition} \label{def:Gsemiample}
An integral $S_{n}$-invariant F-nef divisor $D = \pi^{*}(cD_{2}) - \sum_{i \ge 3}a_{i}B_{i}$ is called \emph{$G$-base-point-free} if $|D|_{G}$ is base-point-free. It is \emph{$G$-semi-ample} if $|mD|_{G}$ is base-point-free for some $m \gg 0$. 
\end{definition}

This sub linear system is particularly nice because the base locus can be described in a combinatorial way. 

\begin{lemma}\label{lem:baselocus2}
Let $D = \pi^{*}(cD_{2}) - \sum_{i \ge 3}a_{i}B_{i}$ be an integral $S_{n}$-invariant F-nef divisor. Then the base locus $\mathrm{Bs}(|D|_{G})$ is a union of closures of boundary strata. 
\end{lemma}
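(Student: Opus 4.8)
### Proof proposal for Lemma \ref{lem:baselocus2}

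The plan is to reduce the statement to Lemma \ref{lem:baselocus}, which already identifies exactly when a general point of a boundary stratum $B = \bigcap_{I\in T} B_I$ fails to lie on a given generator $\widetilde D_\Gamma \in |D|_G$. The key observation is that the base locus $\mathrm{Bs}(|D|_G)$ is by definition $S_n$-invariant (the generating set $\{D_\Gamma\}$ in Proposition \ref{prop:generatorsofD} is permuted by $S_n$), and it is closed. On $\Mznb$ the $S_n$-invariant closed subsets are precisely the unions of closures of boundary strata, because every point of $\Mznb$ lies in the relative interior of a unique boundary stratum and the $S_n$-orbit of that stratum is again a boundary stratum (the boundary stratification is the $S_n$-invariant stratification by topological type of the stable curve). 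So it suffices to show that $\mathrm{Bs}(|D|_G)$ is a union of \emph{locally closed} boundary strata; taking closures then follows since the base locus is closed.

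First I would set up the stratification: for a point $x\in\Mznb$, let $B_x$ be the locally closed boundary stratum whose relative interior contains $x$, indexed by some $T_x \subset \{I \subset [n] : 2 \le |I| \le \lfloor n/2\rfloor\}$ (with $T_x = \emptyset$ meaning $x \in \Mzn$, the open part). Next I would argue that whether $x \in \mathrm{Bs}(|D|_G)$ depends only on $T_x$, i.e. only on the stratum $B_x$ and not on the particular point $x$ in its interior. One direction is Lemma \ref{lem:baselocus}: if there exists a generator $\widetilde D_\Gamma$ with $w_I = a_{|I|}$ for all $I \in T_x$, then a \emph{general} point of $B_x$ avoids $\mathrm{Supp}(\widetilde D_\Gamma)$. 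I then need to upgrade ``general point'' to ``every point of the interior'': here I would use that the condition defining $\widetilde D_\Gamma$ and the stratum interior are both $S_n$-stable together with the fact that, once the combinatorial constraints $w_I = a_{|I|}$ ($I\in T_x$) are met, the generator $\widetilde D_\Gamma$ restricted to $\overline{B_x}$ is a nonzero section, and its zero locus is a proper closed subset — but the complement of that zero locus in the interior of $B_x$ may not be all of the interior. This is the subtlety.

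The clean way around this is to observe that the base locus of a linear system is cut out by the common zeros of \emph{all} sections, so $x\notin\mathrm{Bs}(|D|_G)$ as soon as one generator $\widetilde D_\Gamma$ is nonvanishing at $x$; and the set of generators is closed under the $S_n$-action and — more importantly — the relevant combinatorial data is insensitive to which point of the interior of $B_x$ we pick, because all such points have the same ``type'' $T_x$, and the vanishing of $\widetilde D_\Gamma$ at a point $x$ in the interior of $B_x$ with $w_I = a_{|I|}$ for $I \in T_x$ can only come from $\Gamma$ having ``extra'' edges inside some $J \subset [n]$ that forces $x$ to additionally satisfy $p_i = p_j$ for $i,j$ in a larger set — but that would put $x$ in a smaller stratum, contradicting $x \in \mathrm{int}(B_x)$. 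Making this precise, one shows: $x \in \mathrm{Bs}(|D|_G)$ if and only if for every $\Gamma$ satisfying the two conditions of Proposition \ref{prop:generatorsofD} there is some $I \in T_x$ with $w_I > a_{|I|}$, a condition depending only on $T_x$. Therefore $\mathrm{Bs}(|D|_G)$ is a union of interiors of boundary strata, hence (being closed) a union of their closures.

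The main obstacle I anticipate is exactly the passage from ``general point of $B_x$'' (what Lemma \ref{lem:baselocus} gives) to ``every point in the interior of $B_x$.'' This requires understanding $\mathrm{Supp}(\widetilde D_\Gamma) \cap \mathrm{int}(B_x)$ concretely. The resolution is to note that $\widetilde D_\Gamma = \pi^*(D_\Gamma) - \sum_{i\ge 3} a_i B_i$, so on the interior of $B_x$ the contribution $\sum_{i\ge 3} a_i B_i$ vanishes identically on the relevant components, and the remaining vanishing of $\pi^*(D_\Gamma)$ along $\mathrm{int}(B_x)$ is governed entirely by whether $w_I = a_{|I|}$ (no extra vanishing) or $w_I > a_{|I|}$ (vanishing along all of $B_I \supset B_x$) for each $I$ — a purely combinatorial dichotomy with no ``general point'' caveat once we are in the interior. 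This is essentially the content of the multiplicity computation in the proof of Proposition \ref{prop:generatorsofD}, applied stratum-by-stratum, and it makes the dependence on $x$ factor through $T_x$ alone.
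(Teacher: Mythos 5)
Your argument is correct and, once the dust settles, it is the same as the paper's: the entire content is that $\mathrm{Supp}(\widetilde{D}_{\Gamma})=\bigcup_{I:\,w_{I}>a_{|I|}}B_{I}$ is a union of boundary divisors, so that $\mathrm{Bs}(|D|_{G})=\bigcap_{\Gamma}\mathrm{Supp}(\widetilde{D}_{\Gamma})$ is an intersection of unions of boundary divisors and hence a union of closures of boundary strata --- and this same observation dissolves your ``general point vs.\ every point of the interior'' worry, since whether $x$ lies in each support depends only on which $B_{I}$ contain $x$, i.e.\ only on $T_{x}$. One caveat: your assertion that the $S_{n}$-invariant closed subsets of $\Mznb$ are \emph{precisely} the unions of closures of boundary strata is false (consider the $S_{n}$-orbit of a single point of $\Mzn$), but you never actually use it, so nothing in the proof breaks.
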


\begin{proof}
For $\widetilde{D}_{\Gamma} \in |D|_{G}$, 
\[
	\mathrm{Supp}(\widetilde{D}_{\Gamma}) = \bigcup_{I}B_{I}
\]
where the sum is taken over all $I$ where the number of edges connecting vertices in $I$ is strictly larger than $a_{|I|}$. Since $|D|_{G}$ is generated by $\widetilde{D}_{\Gamma}$,
\[
	\mathrm{Bs}(|D|_{G}) = \bigcap_{\widetilde{D}_{\Gamma} \in |D|_{G}}
	\mathrm{Supp}(\widetilde{D}_{\Gamma}).
\]
\end{proof}

\begin{proposition}\label{prop:Gbpf}
Let $D = \pi^{*}(cD_{2}) - \sum_{i \ge 3}a_{i}B_{i}$ be an integral $S_{n}$-invariant F-nef divisor. Then $D$ is $G$-base-point-free if and only if for every F-point $F = \cap_{I \in T}B_{I}$, there is a graph $\Gamma$ such that 
\begin{enumerate}
\item $\mathbf{deg}\; \Gamma = (c(n-1), c(n-1), \cdots, c(n-1))$;
\item For each $I \subset [n]$ with $2 \le |I| \le n/2$, $w_{I} \ge a_{|I|}$;
\item For each $J \in T$ with $|J| \le n/2$, $w_{J} = a_{|J|}$;
\end{enumerate}
\end{proposition}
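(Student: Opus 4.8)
The plan is to show that $D$ is $G$-base-point-free precisely when, for every boundary stratum that is an F-point (a zero-dimensional boundary stratum), the combinatorial conditions (1)--(3) are simultaneously satisfiable by some graph $\Gamma$. The bridge between the geometric statement and the combinatorial one is Lemma \ref{lem:baselocus} together with Lemma \ref{lem:baselocus2}, which reduce the question of base-point-freeness to checking whether every boundary stratum contains a point avoiding the support of some $\widetilde{D}_\Gamma \in |D|_G$; and by Lemma \ref{lem:baselocus2} the base locus is a union of closures of boundary strata, so it suffices to check the smallest ones, i.e.\ the F-points.

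\textbf{Step 1: Reduce to F-points.} By Lemma \ref{lem:baselocus2}, $\mathrm{Bs}(|D|_G)$ is a union of closures of boundary strata. So $|D|_G$ is base-point-free if and only if no boundary stratum is entirely contained in the base locus, if and only if every boundary stratum $B = \bigcap_{I \in T} B_I$ contains a point not lying on some $\widetilde{D}_\Gamma$. Since the closure of a boundary stratum always contains an F-point (the closure of $B = \bigcap_{I\in T}B_I$ contains $\bigcap_{I\in T'}B_I$ for any $T' \supseteq T$ with $T'$ indexing an F-point), it is enough to verify the condition on F-points only: if every F-point $F = \bigcap_{I\in T}B_I$ is off $\mathrm{Bs}(|D|_G)$, then no boundary stratum is contained in the base locus. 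Here I should be careful to spell out why ``F-point'' means ``maximal $T$'', i.e.\ $T$ is a set of subsets indexing a zero-dimensional stratum, and to recall that such strata exist in every boundary stratum closure.

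\textbf{Step 2: Translate ``general point of $B$ off $\mathrm{Supp}(\widetilde{D}_\Gamma)$'' into conditions (1)--(3).} For a given F-point $F = \bigcap_{I\in T}B_I$, a graph $\Gamma$ gives $\widetilde{D}_\Gamma \in |D|_G$ exactly when conditions (1) and (2) hold (this is Proposition \ref{prop:generatorsofD} together with the definition of $|D|_G$). By Lemma \ref{lem:baselocus}, a general point of $F$ is not in $\mathrm{Supp}(\widetilde{D}_\Gamma)$ if and only if $w_I = a_{|I|}$ for every $I \in T$ with $|I| \le n/2$ --- that is, condition (3). Since $F$ is zero-dimensional, ``general point'' is just ``the point,'' so $F \notin \mathrm{Bs}(|D|_G)$ if and only if there exists a graph $\Gamma$ satisfying (1), (2), and (3). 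Combining Steps 1 and 2 gives the equivalence: $D$ is $G$-base-point-free $\iff$ for every F-point $F = \bigcap_{I\in T}B_I$ there is a $\Gamma$ satisfying (1)--(3).

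\textbf{Main obstacle.} The genuinely substantive point is Step 1: the claim that it suffices to test F-points rather than all boundary strata. This needs the fact that the base locus is a union of \emph{closed} boundary strata (Lemma \ref{lem:baselocus2}) and that every boundary stratum closure contains an F-point indexed by an enlargement $T' \supseteq T$; one must also check that a graph witnessing (3) for the larger set $T'$ automatically witnesses (3) for $T$ (immediate, since $T \subseteq T'$), so no information is lost in passing to F-points. The rest --- Step 2 --- is a direct bookkeeping translation via Lemmas \ref{lem:baselocus} and \ref{lem:baselocus2} and Proposition \ref{prop:generatorsofD}, with the only care needed being the interplay of the $|I| \le n/2$ versus $|I| = n/2$ cases already handled in Proposition \ref{prop:generatorsofD}.
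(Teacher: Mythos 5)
Your proposal is correct and follows essentially the same route as the paper: reduce to F-points via Lemma \ref{lem:baselocus2} (the base locus is a union of closed boundary strata, each of whose closures contains an F-point), then translate membership of an F-point in $\mathrm{Supp}(\widetilde{D}_{\Gamma})$ into conditions (1)--(3) via Proposition \ref{prop:generatorsofD} and Lemma \ref{lem:baselocus}. The paper's proof is just a terser version of your two steps.
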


\begin{proof}
The above condition implies that the base locus of $|D|_{G}$ does not contain any F-point. Since $\mathrm{Bs}(|D|_{G})$ is a union of boundary strata by Lemma \ref{lem:baselocus2}, if it is non-empty, then there must be at least on F-point in it. Thus $|D|_{G}$ is base-point-free.
\end{proof}

Since the $G$-base-point-freeness implies the base-point-freeness, Proposition \ref{prop:Gbpf} provides a purely combinatorial/computational sufficient condition for being a base-point-free divisor. 

Note that sums and scalar multiples of graphical divisors are graphical divisors, too. So it is straightforward to see that for any two $G$-base-point-free (resp. $G$-semi-ample) divisors $D$ and $D'$, $D+D'$ and their nonnegative scalar multiples are all $G$-base-point-free (resp. $G$-semi-ample). 

\begin{definition}\label{def:GS}
Let $\mathrm{GS}(\Mznb)$ be the convex cone generated by $G$-semi-ample divisors in $\mathrm{N}^{1}(\Mznb)^{S_{n}}$.
\end{definition}

We have the following obvious implications:
\begin{equation}\label{eqn:implications}
	\mbox{$G$-base-point-free} \Rightarrow
	\mbox{$G$-semi-ample} \Rightarrow
	\mbox{semi-ample} \Rightarrow
	\mbox{nef} \Rightarrow \mbox{$F$-nef}.
\end{equation}

\begin{theorem}\label{thm:G-semiamplecone}
The cone $\mathrm{GS}(\Mznb)$ of $G$-semi-ample divisors of $\Mznb$ is closed and polyhedral.
\end{theorem}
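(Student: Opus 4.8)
The plan is to identify $\mathrm{GS}(\Mznb)$ with an explicit rational polyhedral cone obtained by ``clearing denominators'' in the combinatorial criterion of Proposition~\ref{prop:Gbpf}. Throughout I work in the linear coordinates $(c, a_{3}, \ldots, a_{\lfloor n/2 \rfloor})$ on $\mathrm{N}^{1}(\Mznb)^{S_{n}}_{\QQ}$ attached to the expression $D = \pi^{*}(cD_{2}) - \sum_{i \ge 3}a_{i}B_{i}$ of Lemma~\ref{lem:Fnefexpression}, with $a_{1} = a_{2} = 0$; since the change of basis to the $B_{i}$-coordinates is rational, the $S_{n}$-invariant F-nef cone is a rational polyhedral cone in these coordinates and the integral divisor classes form a rational lattice $\Lambda$. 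For an F-point $F = \bigcap_{I \in T}B_{I}$ and parameters $(c,\mathbf{a})$, let $P_{F}(c,\mathbf{a}) \subseteq \RR^{\binom{n}{2}}_{\ge 0}$ be the polytope in the variables $(w_{ij})_{1 \le i < j \le n}$ cut out by $\sum_{j \ne i}w_{ij} = c(n-1)$ for all $i$, by $\sum_{\{i,j\}\subseteq I}w_{ij} \ge a_{|I|}$ for all $I$ with $2 \le |I| \le n/2$, and by $\sum_{\{i,j\}\subseteq J}w_{ij} = a_{|J|}$ for all $J \in T$ with $|J| \le n/2$. The point of this definition is that a multigraph $\Gamma$ on $[n]$ is literally the same data as a tuple of edge multiplicities $(w_{ij}) \in \ZZ_{\ge 0}^{\binom{n}{2}}$, under which $w_{I} = \sum_{\{i,j\}\subseteq I}w_{ij}$; so a graph satisfying the three conditions of Proposition~\ref{prop:Gbpf} for $F$ is exactly a lattice point of $P_{F}(c,\mathbf{a})$. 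Since $P_{F}(mc, m\mathbf{a}) = m\,P_{F}(c,\mathbf{a})$, applying Proposition~\ref{prop:Gbpf} to the integral divisor $mD$ shows that, for integral F-nef $D$ with parameters $(c,\mathbf{a})$, the divisor $mD$ is $G$-base-point-free if and only if $m\,P_{F}(c,\mathbf{a})$ contains a lattice point for every F-point $F$.

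The crucial step is to collapse ``$G$-semi-ample'' (which quantifies over an unspecified large $m$) into a single denominator-free condition: I claim an integral F-nef $D$ with parameters $(c,\mathbf{a})$ is $G$-semi-ample if and only if $P_{F}(c,\mathbf{a}) \ne \emptyset$ for every F-point $F$. One direction is immediate from the previous paragraph. For the converse, each $P_{F}(c,\mathbf{a})$ is defined by linear inequalities with rational coefficients and rational right-hand sides, so if it is nonempty it contains a rational point; hence $m_{F}P_{F}(c,\mathbf{a})$ contains a lattice point for some positive integer $m_{F}$, and taking $m$ to be a common multiple of the finitely many $m_{F}$, every $m\,P_{F}(c,\mathbf{a})$ contains a lattice point, so $mD$ is $G$-base-point-free.

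With this in hand, the polyhedral description is standard polyhedral geometry. For a fixed F-point $F$, the set $Q_{F}$ of triples $(c,\mathbf{a},(w_{ij}))$ satisfying the F-nef inequalities on $(c,\mathbf{a})$ together with the relations $w_{ij} \ge 0$, $\sum_{j \ne i}w_{ij} = c(n-1)$, $\sum_{\{i,j\}\subseteq I}w_{ij} \ge a_{|I|}$, $\sum_{\{i,j\}\subseteq J}w_{ij} = a_{|J|}$ is a rational polyhedral cone, since all of these relations are homogeneous linear with integer coefficients; its image $C_{F}$ under the coordinate projection $(c,\mathbf{a},w)\mapsto (c,\mathbf{a})$ is a rational polyhedral cone by Fourier--Motzkin elimination, and $C_{F}$ is precisely the set of F-nef parameter vectors $(c,\mathbf{a})$ for which $P_{F}(c,\mathbf{a})$ is nonempty. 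Put $C := \bigcap_{F}C_{F}$, the intersection over the finite set of F-points; then $C$ is a rational polyhedral cone, hence closed. By the claim above together with Lemma~\ref{lem:Fnefexpression}, an integral $S_{n}$-invariant divisor is $G$-semi-ample exactly when its parameter vector lies in $\Lambda \cap C$. Finally, a rational polyhedral cone is generated by finitely many rational rays, each of which may be scaled into $\Lambda \cap C$, so $C$ equals the convex cone generated by $\Lambda \cap C$, which by definition is $\mathrm{GS}(\Mznb)$. Hence $\mathrm{GS}(\Mznb) = C$ is closed and polyhedral.

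I expect the main obstacle to be exactly the reduction in the second paragraph: one must check that $G$-semi-ampleness does not secretly depend on how large $m$ must be chosen for different F-points, and the resolution is the elementary but essential fact that nonemptiness of a rational polytope is inherited by a suitable integer dilate. The remaining points are bookkeeping: the $a_{1} = a_{2} = 0$ convention, the exact identification of multigraphs with lattice points of $P_{F}$, and the fact (already used in the proof of Proposition~\ref{prop:generatorsofD}) that once all vertex degrees equal $c(n-1)$ one has $w_{I} = w_{I^{c}}$, so the constraints at $|I| = n/2$ require no special treatment.
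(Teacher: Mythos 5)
Your proposal is correct and follows essentially the same route as the paper: both encode the criterion of Proposition~\ref{prop:Gbpf} as a rational polyhedral cone in the product of the space of graph weightings $(w_{ij})$ with $\mathrm{N}^{1}(\Mznb)^{S_{n}}$, observe that $G$-semi-ampleness of an integral divisor amounts to nonemptiness of the rational fiber polytope (so a rational point suffices after clearing denominators), and conclude by projecting the cone. The only cosmetic differences are that you build the F-nef inequalities into the cone from the start (the paper adds them afterward and then notes they are redundant) and that you spell out more explicitly why the projected cone coincides with the cone generated by the integral $G$-semi-ample classes.
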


\begin{proof}

We may consider a multigraph $\Gamma$ as a graph weighting $w : E_{K_{n}} \to \QQ$ where $E_{K_{n}}$ is the set of edges on the complete graph $K_{n}$, by setting $w(\overline{ij}) = w_{ij}$, the number of edges between $i$ and $j$. Consider $V := \QQ^{{n \choose 2}}$ with coordinates $\{w_{ij}\}_{1 \le i < j \le n}$. This space can be regarded as the space of graph weightings. By representing any non-trivial $S_{n}$-invariant F-nef divisor in the form $\pi^{*}(cD_{2}) - \sum_{i \ge 3}a_{i}B_{i}$, we may identify $\mathrm{N}^{1}(\Mznb)^{S_{n}}$ with $\QQ^{\lfloor n/2 \rfloor - 1}$ whose coordinates are $(c, a_{i})_{3 \le i \le \lfloor n/2\rfloor}$. For each F-point $F = \bigcap_{J \in T}B_{J}$, we can define a polyhedral cone $Q(n, F) \subset V \times \mathrm{N}^{1}(\Mznb)^{S_{n}}$ by the following inequalities and equations:
\begin{enumerate}
\item $c, a_{i}, w_{ij} \ge 0$;
\item $\sum_{j \ne i} w_{ij} = c(n-1)$;
\item $\sum_{i,j \in I}w_{ij} \ge a_{|I|}$ for each $I$ with $3 \le |I| \le n/2$;
\item $\sum_{i,j \in J}w_{ij} = a_{|J|}$ for each $J \in T$ with $|J| \le n/2$.
\end{enumerate}

Let $\rho : V \times \mathrm{N}^{1}(\Mznb)^{S_{n}}\to \mathrm{N}^{1}(\Mznb)^{S_{n}}$ be the projection defined by 
\begin{equation}\label{eqn:projection}
	\rho(w_{ij}, c, a_{i}) = \pi^{*}(cD_{2}) - \sum_{i \ge 3}a_{i}B_{i} 
	= (c, a_{i}).
\end{equation}
For an integral $S_{n}$-invariant F-nef divisor $D = \pi^{*}(cD_{2}) - \sum_{i \ge 3}a_{i}B_{i}$, $\mathrm{Bs}(|D|_{G})$ does not contain an F-point $F := \bigcap_{J \in T}B_{J}$ if and only if $\rho^{-1}(D) \cap Q(n, F)$ has an integral point. So $\mathrm{Bs}(|mD|_{G})$ does not contain $F$ for some $m$ if and only if $\rho^{-1}(D) \cap Q(n, F)$ has a rational point, if and only if $D \in \rho(Q(n, F))$. 

Therefore $|mD|_{G}$ is base-point-free for some $m \gg 0$ if and only if $D \in \bigcap_{F}\rho(Q(n, F))$ where the intersection is taken over all F-points. In particular, a priori, the $G$-semi-ample cone is the intersection of the $S_{n}$-invariant F-nef cone and $\bigcap_{F}\rho(Q(n, F))$. Therefore it is polyhedral and closed.

Indeed, $\bigcap_{F}\rho(Q(n, F))$ is a subcone of the $S_{n}$-invariant F-nef cone. For any boundary stratum $B = \bigcap_{J \in T}B_{J}$, we can define a cone $Q(n, B) \subset V \times \mathrm{N}^{1}(\Mznb)^{S_{n}}$ in the same way. If $B$ and $B'$ are two boundary strata such that $B \subset B'$, then $Q(n, B) \subset Q(n, B')$. Therefore the intersection $\bigcap_{B}\rho(Q(n, B))$ for all boundary strata is equal to $\bigcap_{F}\rho(Q(n, F))$ where the intersection is taken over F-points. In particular, if $D$ is a divisor in $\bigcap_{F}\rho(Q(n, F))$, then for each F-curve $F_{I}$, there is a section in $\rH^{0}(\cO(mD))$ for some $m \gg 0$ which is nonzero on $F_{I}$. This implies $D\cdot F_{I} \ge 0$ and $D$ is F-nef.
\end{proof}

Thus we obtain a polyhedral lower bound of $\mathrm{Nef}(\Mznb)^{S_{n}}$. 

By the proof of Theorem \ref{thm:G-semiamplecone}, we obtain the following corollary, which provides a computational approach to the $S_{n}$-invariant F-conjecture. 

\begin{corollary}\label{cor:polyhedralcone}
Let $\rho: V  \times \rN^{1}(\Mznb)^{S_{n}} \to \rN^{1}(\Mznb)^{S_{n}}$ be the projection in \eqref{eqn:projection}. Then an $S_{n}$-invariant F-nef divisor $\pi^{*}(cD_{2}) - \sum a_{i}B_{i}$ is G-semi-ample (hence semi-ample) if and only if $D \in \bigcap_{F} \rho(Q(n, F))$.  
\end{corollary}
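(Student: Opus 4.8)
The plan is to extract the statement from what was already proved inside Theorem~\ref{thm:G-semiamplecone}; the work here is only one of reorganization. Write $D = \pi^{*}(cD_{2}) - \sum_{i\ge 3}a_{i}B_{i}$ as in Lemma~\ref{lem:Fnefexpression}. First I would unwind Definition~\ref{def:Gsemiample}: $D$ is $G$-semi-ample exactly when $|mD|_{G}$ is base-point-free for some $m\gg 0$. By Lemma~\ref{lem:baselocus2}, $\mathrm{Bs}(|mD|_{G})$ is a union of closures of boundary strata; since every nonempty union of closures of boundary strata contains a zero-dimensional stratum, hence an F-point, one has $\mathrm{Bs}(|mD|_{G})=\emptyset$ if and only if no F-point lies in it. Therefore $D$ is $G$-semi-ample if and only if for each F-point $F=\bigcap_{J\in T}B_{J}$ there is an $m$ with $F\notin\mathrm{Bs}(|mD|_{G})$.

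Next I would convert the condition $F\notin\mathrm{Bs}(|mD|_{G})$ into the polyhedral language of $Q(n,F)$. Applying Proposition~\ref{prop:Gbpf} (together with Lemma~\ref{lem:baselocus}) to the divisor $mD=\pi^{*}(mcD_{2})-\sum_{i\ge 3}(ma_{i})B_{i}$, the condition $F\notin\mathrm{Bs}(|mD|_{G})$ is equivalent to the existence of a multigraph $\Gamma$ with $\mathbf{deg}\,\Gamma=(mc(n-1),\dots,mc(n-1))$, with $w_{I}\ge ma_{|I|}$ for every $I$ with $2\le|I|\le n/2$, and with $w_{J}=ma_{|J|}$ for every $J\in T$ with $|J|\le n/2$; equivalently, to the existence of an integral point of $Q(n,F)$ lying over $mD$ under $\rho$. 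Because the four families of (in)equalities cutting out $Q(n,F)$ are homogeneous of degree one in $(c,a_{i},w_{ij})$, an integral point over $mD$ exists for some $m>0$ if and only if $\rho^{-1}(D)\cap Q(n,F)$ contains a rational point — clear denominators in one direction, divide by $m$ in the other — which, since $\rho(Q(n,F))$ is a rational polyhedral cone, is precisely the condition $D\in\rho(Q(n,F))$. Intersecting over all F-points gives: $D$ is $G$-semi-ample if and only if $D\in\bigcap_{F}\rho(Q(n,F))$. The parenthetical assertion that a $G$-semi-ample divisor is semi-ample is the second implication of \eqref{eqn:implications}.

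Since every ingredient was already established in the proof of Theorem~\ref{thm:G-semiamplecone}, there is no genuine obstacle; the only points requiring a moment of care are the scaling argument of the previous paragraph — that passing from $D$ to $mD$ preserves all the defining relations of $Q(n,F)$, which holds by homogeneity — and the appeal to Lemma~\ref{lem:baselocus2}, which is what lets us certify base-point-freeness by checking only the finitely many F-points instead of arbitrary points of $\Mznb$.
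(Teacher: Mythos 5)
Your proposal is correct and follows the same route the paper takes: the corollary is extracted directly from the proof of Theorem~\ref{thm:G-semiamplecone}, namely reducing base-point-freeness of $|mD|_G$ to F-points via Lemma~\ref{lem:baselocus2}, translating each F-point condition into integral points of $\rho^{-1}(mD)\cap Q(n,F)$ via Proposition~\ref{prop:Gbpf}, and using homogeneity of the defining (in)equalities to pass to rational points of $\rho^{-1}(D)\cap Q(n,F)$, i.e.\ to $D\in\rho(Q(n,F))$. Your explicit remarks on the scaling step and on a single $m$ working for all F-points only make the paper's one-line assertions more precise.
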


%%%%%%%%%%%%%%%%%%%%%%%%%%%%%%%%%%%%

\section{Computational results}\label{sec:computation}

In this section we list several computational results. The calculations can be found on the webpage \cite{MS16}.

\begin{theorem}\label{thm:semiamplecomputation}
For $n \le \checkedn$, over $\spec \ZZ$, the $S_{n}$-invariant F-nef cone coincides with the $G$-semi-ample cone. 
\end{theorem}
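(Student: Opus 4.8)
The plan is computational, resting on Corollary \ref{cor:polyhedralcone}. By the chain of implications \eqref{eqn:implications} the $G$-semi-ample cone is already contained in the $S_n$-invariant F-nef cone, so the only thing to prove is the reverse inclusion. By Corollary \ref{cor:polyhedralcone} this is exactly the assertion that every $S_n$-invariant F-nef divisor lies in $\bigcap_F \rho(Q(n,F))$, the intersection running over the finitely many F-points (zero-dimensional boundary strata) of $\Mznb$.

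First I would make both cones explicit in the coordinates $(c, a_i)_{3 \le i \le \lfloor n/2 \rfloor}$ on $\rN^{1}(\Mznb)^{S_n}$ furnished by Lemma \ref{lem:Fnefexpression}. The $S_n$-invariant F-nef cone is the solution set of a finite system of linear inequalities, one for each $S_n$-orbit of F-curves; an F-curve is indexed by a partition of $n$ into four positive parts, and its intersection with $\pi^{*}(cD_2) - \sum a_i B_i$ is an explicit linear form in $(c,a_i)$ obtained from \eqref{eqn:pullback} and the formulas of \cite{GKM02} (the subfamily of partitions $\{1,1,j,n-2-j\}$ already appears in the proof of Lemma \ref{lem:Fnefexpression}). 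Feeding this system to a polyhedral-geometry package produces the finite list of extremal ray generators of the F-nef cone, each an integral vector $D_0 = (c_0, a_{i,0})$. Since the F-nef cone is pointed and is the cone over these generators, and each $\rho(Q(n,F))$ is a convex cone that contains the ray through $D_0$ as soon as it contains $D_0$, the claim reduces to: for every extremal ray generator $D_0$ of the F-nef cone and every F-point $F$, one has $D_0 \in \rho(Q(n,F))$.

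This last condition is a linear feasibility problem. By the definition of $\rho$ in \eqref{eqn:projection}, $D_0 \in \rho(Q(n,F))$ if and only if the fiber $\rho^{-1}(D_0) \cap Q(n,F)$ is nonempty; substituting the fixed numbers $c_0, a_{i,0}$ into the defining relations of $Q(n,F)$ from the proof of Theorem \ref{thm:G-semiamplecone}, this fiber is a rational polytope in the $\binom{n}{2}$ edge variables $\{w_{ij}\}$ cut out by the vertex-degree equations $\sum_{j \ne i} w_{ij} = c_0(n-1)$, the inequalities $w_I \ge a_{|I|,0}$ for $3 \le |I| \le n/2$, and the equalities $w_J = a_{|J|,0}$ for $J \in T$. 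A rational point of this polytope, after clearing denominators, is a multigraph $\Gamma$ realizing the conditions of Proposition \ref{prop:Gbpf} at $F$; as the entire discussion of Section \ref{sec:graphicalalgebra} is valid over $\ZZ$, this certifies that $|mD_0|_G$ has no base point along $F$ over $\spec \ZZ$ for suitable $m$. If each of the finitely many (extremal ray, F-point) feasibility problems is solvable, then every extremal ray generator of the F-nef cone is $G$-semi-ample, hence the F-nef cone is contained in $\bigcap_F \rho(Q(n,F))$ and the two cones coincide over $\spec \ZZ$ for that value of $n$.

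The main obstacle is the sheer size of the search: the number of F-points grows quickly with $n$, the ambient dimension $\binom{n}{2}$ reaches $171$ at $n = \checkedn$, and each LP carries up to $\sum_{i=3}^{\lfloor n/2 \rfloor}\binom{n}{i}$ inequality constraints. The decisive simplification is to exploit the stabilizer $\mathrm{Stab}(F) \subset S_n$ of each F-point: the constraint system defining $\rho^{-1}(D_0) \cap Q(n,F)$ is $\mathrm{Stab}(F)$-invariant (the right-hand sides $a_{|I|,0}$ depend only on $|I|$, which $\mathrm{Stab}(F)$ preserves), so averaging a feasible point over $\mathrm{Stab}(F)$ shows that feasibility is equivalent to feasibility on the much smaller space of $\mathrm{Stab}(F)$-invariant weightings, indexed by orbits of edges and of subsets. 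Carrying out these reduced feasibility computations in exact rational arithmetic makes the verification tractable for $n \le \checkedn$, and the resulting data and certificates are those posted at \cite{MS16}.
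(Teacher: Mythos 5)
Your proposal is correct and follows essentially the same route as the paper: reduce, by convexity, to the integral extremal-ray generators of the $S_n$-invariant F-nef cone, and then verify via Corollary \ref{cor:polyhedralcone} the nonemptiness of the polytope $\rho^{-1}(D)\cap Q(n,F)$ for each such generator $D$ and each F-point $F$ by a computer feasibility check. The additional remarks on reducing the linear programs by the stabilizer of $F$ are implementation details not spelled out in the paper, but they do not change the argument.
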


\begin{proof}
Since $\mathrm{GS}(\Mznb)$ is a convex subcone of the
$S_{n}$-invariant F-nef cone, it is sufficient to show that every
integral generator of an extremal ray of the $S_{n}$-invariant F-nef
cone is $G$-semi-ample. By Corollary \ref{cor:polyhedralcone}, it is
sufficient to show the feasibility of the polytope $\rho^{-1}(D) \cap
Q(n, F)$ for each integral generator $D$ and an F-point $F$. By using
Sage \cite{Sage} and Gurobi \cite{Gur16}, we checked that for $n \le \checkedn$, such a polytope is nonempty. 
\end{proof}

Indeed a more efficient result is true. By checking $G$-base-point-freeness of the Hilbert basis of $S_{n}$-invariant F-nef cone, we obtain the following result. 

\begin{theorem}\label{thm:bpfcomputation}
\begin{enumerate}
\item For $n \le \bpfcheckedn$, over $\spec \ZZ$, for any integral $S_{n}$-invariant F-nef divisor $D$, $2D$ is $G$-base-point-free. 
\item For $n \le 11$ or $13$, over $\spec \ZZ$, for any integral $S_{n}$-invariant F-nef divisor $D$, $D$ is $G$-base-point-free. 
\end{enumerate}
\end{theorem}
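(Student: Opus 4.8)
The plan is to reduce each statement to a finite list of integer-feasibility checks on the polytopes $Q(n,F)$ from Section \ref{sec:Gbpf}, and then report the outcome of the computer search. The two statements differ only in which multiple of the divisor is tested and in the range of $n$, so the strategy is uniform. First I would invoke Theorem \ref{thm:G-semiamplecone} and Proposition \ref{prop:Gbpf}: $G$-base-point-freeness of an integral $S_n$-invariant F-nef divisor $D = \pi^*(cD_2) - \sum_{i\ge 3} a_i B_i$ is equivalent to the assertion that for \emph{every} F-point $F = \bigcap_{I\in T} B_I$ the fiber $\rho^{-1}(D)\cap Q(n,F)$ contains an \emph{integral} point, i.e. a graph $\Gamma$ realizing conditions (1)--(3) of Proposition \ref{prop:Gbpf}. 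Since $\mathrm{GS}(\Mznb)$ and the $G$-base-point-free cone are stable under addition of graphical divisors, it suffices to verify the property on a generating set of the monoid of integral F-nef divisors, namely the Hilbert basis of the $S_n$-invariant F-nef cone (computed, e.g., via the explicit inequalities of \cite[(0.14)]{GKM02} together with the convexity constraints appearing in the proof of Lemma \ref{lem:Fnefexpression}).

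Next I would carry out the reduction to a finite problem. For fixed $n$, the F-nef cone is a rational polyhedral cone in $\QQ^{\lfloor n/2\rfloor - 1}$, so its Hilbert basis is finite and computable; call it $\mathcal{H}_n$. The F-points $F$ are indexed by the finitely many "F-partitions" of $[n]$ (equivalently, by subsets $T$ of $\{I : 2\le |I|\le\lfloor n/2\rfloor\}$ cut out by a partition of $[n]$ into at most four parts and its coarsenings), so up to the $S_n$-action there are only finitely many $Q(n,F)$ to consider. For part (1) I would, for each $D\in\mathcal H_n$, each F-point $F$, and each $n\le \bpfcheckedn$, test whether the polytope $\rho^{-1}(2D)\cap Q(n,F)$ has an integral point; for part (2), the same with $D$ in place of $2D$, for $n\le 11$ and $n=13$. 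Each such test is a bounded integer feasibility problem (the $w_{ij}$ are nonnegative and bounded by $c(n-1)$), solvable by an ILP solver. I would then state that Sage \cite{Sage} and Gurobi \cite{Gur16} confirm feasibility in every case, with the computations available at \cite{MS16}.

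The main obstacle is purely computational scale rather than conceptual: the number of variables $w_{ij}$ is $\binom{n}{2}$, which for $n$ near $\bpfcheckedn$ is already in the hundreds, and the number of inequalities of type (3) in $Q(n,F)$ grows like $2^n$ (one for each subset $I$), while the number of F-points and the size of the Hilbert basis also grow quickly. The art is in organizing the search: exploiting the $S_n$-symmetry to collapse F-points and Hilbert-basis elements into orbit representatives, exploiting the linearity in $(c,a_i)$ to test whole rays rather than individual integral points where possible, and—crucially—explaining why testing only the Hilbert basis (for $G$-base-point-freeness, which is \emph{not} obviously multiplicative) is legitimate: this is exactly the additivity remark preceding Definition \ref{def:GS}, since $D + D'$ being $G$-bpf whenever $D$ and $D'$ are means the $G$-bpf integral divisors form a submonoid, and a submonoid containing the Hilbert basis contains everything. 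The discrepancy between part (1) ($n\le\bpfcheckedn$, factor $2$) and part (2) (smaller $n$, no factor) is simply where the search either becomes infeasible or where some Hilbert-basis element fails the un-doubled test; I would note that the doubled statement is what one expects in analogy with \cite{KT09, Tev07}, and is what is needed for Theorem \ref{thm:veryampleintro}.
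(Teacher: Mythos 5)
Your proposal matches the paper's approach: the paper proves this theorem precisely by computing the Hilbert basis of the $S_n$-invariant F-nef cone, checking $G$-base-point-freeness of (twice) each basis element via the integer-feasibility of the polytopes $Q(n,F)$ from Proposition \ref{prop:Gbpf}, and using the additivity of graphical divisors to extend to all integral F-nef divisors. Your elaboration of why testing the Hilbert basis suffices, and of the computational organization by $S_n$-orbits, is consistent with what the paper does.
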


\begin{remark}
This computation was faster than we predicted. To check the $G$-base-point-freeness of a divisor $D$ we need to do the following computation.
\begin{enumerate}
\item Take a representative of an F-point $F$ from each $S_{n}$-orbit. Let $P$ be the set of the representatives. 
\item For each $F \in P$, compute the nonemptyness of $Q(n, F)$.
\end{enumerate}
But when $n$ is small, for most of integral divisors, $\cap_{F \in P}Q(n, F)$ is nonempty. Thus it is sufficient to solve the feasibility problem once per each divisor. Even when this is not the case, the number of feasibility problems we need to solve is significantly small compare to the cardinality of $P$.
\end{remark}

\begin{conjecture}
For any integral $S_{n}$-invariant F-nef divisor $D$, $2D$ is $G$-base-point-free. In particular, $2D$ is base-point-free.
\end{conjecture}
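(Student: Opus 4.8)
The plan is to reduce the conjecture, via Proposition \ref{prop:Gbpf}, to an explicit graph-construction problem and then to attack it using the shape of the coefficient sequence $\{a_{i}\}$ forced by F-nefness. Write $D = \pi^{*}(cD_{2}) - \sum_{i \ge 3}a_{i}B_{i}$ as in Lemma \ref{lem:Fnefexpression}; then $2D = \pi^{*}(2cD_{2}) - \sum_{i \ge 3}2a_{i}B_{i}$ is again an integral $S_{n}$-invariant F-nef divisor, and by Proposition \ref{prop:Gbpf} it is $G$-base-point-free if and only if for every F-point $F = \bigcap_{I \in T}B_{I}$ there is a multigraph $\Gamma$ on $[n]$ with (i) every vertex of degree $2c(n-1)$, (ii) $w_{I} \ge 2a_{|I|}$ whenever $2 \le |I| \le n/2$, and (iii) $w_{J} = 2a_{|J|}$ for every $J \in T$ with $|J| \le n/2$. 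An F-point is the same thing as a trivalent tree $\tau$ on the leaf set $[n]$, and $T$ is the laminar family of its internal-edge clusters, so everything comes down to building, for each $\tau$, a graph of this kind out of the numbers $(c,\{a_{i}\})$. (The factor $2$ is essentially forced by $-\mathrm{Id} \in \SL_{2}$, and Example \ref{ex:bpfnotGbpf} shows that $D$ itself can fail to be $G$-base-point-free.)

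The construction I would attempt first meets (iii) exactly and (ii) ``along $\tau$'', then repairs the vertex degrees. Root $\tau$ at a leaf, so that $T$ together with the singletons and $[n]$ becomes the set of clusters of a rooted binary tree in which each internal cluster $K$ splits uniquely as $K = K_{1} \sqcup K_{2}$; assign a nonnegative weight $\mu_{K}$ to each internal node and let $\Gamma$ contain $\mu_{K}$ parallel edges between every vertex of $K_{1}$ and every vertex of $K_{2}$. Then $w_{K} = w_{K_{1}} + w_{K_{2}} + |K_{1}||K_{2}|\mu_{K}$ for every cluster $K$, so the equalities in (iii) (together with the analogous equalities for the complements of the large clusters) determine the $\mu_{K}$ by a nested triangular recursion from the leaves upward, and their nonnegativity should follow because $(a_{i})$ is nondecreasing and convex with $a_{1} = a_{2} = 0$ (shown in the proof of Lemma \ref{lem:Fnefexpression}), which in particular forces the superadditivity $a_{p+q} \ge a_{p}+a_{q}$ used here. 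Integrality of the $\mu_{K}$ need not hold exactly; one prescribes instead the total number of edges across each cut to be the required value and distributes them, and this is where passing from $D$ to $2D$ supplies the needed parity. Finally, to bring every vertex up to degree exactly $2c(n-1)$ one adds further edges joining only pairs of leaves that lie in no common cluster of size $\le n/2$, so the equalities in (iii) are preserved while the inequalities in (ii) can only improve; the slack $c\binom{i}{2} > a_{i}$ from Lemma \ref{lem:Fnefexpression} should leave room for this, though showing that this degree-balancing is always solvable, uniformly in $\tau$, is itself a transportation-type feasibility question.

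The step I expect to be the genuine obstacle is verifying (ii) for \emph{all} subsets $I$, not merely the laminar ones, for whatever graph the construction produces. For a non-laminar $I$ the quantity $w_{I}$ mixes contributions from many unrelated nodes of $\tau$ and from the degree-filling edges, with no recursion to exploit, and bounding it below by $2a_{|I|}$ seems to require the full system of $S_{n}$-invariant F-nef inequalities of \cite[(0.14)]{GKM02} in a subtle combinatorial way; it is not clear a priori that they suffice. This is only to be expected, since $G$-base-point-free $\Rightarrow$ semi-ample $\Rightarrow$ nef, so the conjecture implies both the $S_{n}$-invariant F-conjecture for all $n$ and the semi-ampleness of every $S_{n}$-invariant nef divisor, neither of which is currently known. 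A realistic intermediate target is therefore to prove the statement on a Hilbert basis (or at least on the extremal rays) of the $S_{n}$-invariant F-nef cone in any range where the cone's facets are understood — which is precisely what the computations behind Theorems \ref{thm:semiamplecomputation} and \ref{thm:bpfcomputation} accomplish for $n$ in the stated ranges — and, with an eye toward an induction on $n$, to seek graphs built functorially from $\tau$ so as to be compatible with the leaf-forgetting and edge-contraction operations relating trees for different $n$.
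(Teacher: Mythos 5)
The statement you were asked to prove is stated in the paper as a \emph{conjecture}: the paper offers no proof of it, only computational verification of finitely many cases ($n \le \bpfcheckedn$ in Theorem \ref{thm:bpfcomputation}, obtained by checking feasibility of the polytopes $\rho^{-1}(2D)\cap Q(n,F)$ of Corollary \ref{cor:polyhedralcone} on a Hilbert basis of the F-nef cone). Your reduction of the statement to the graph-construction problem is exactly the paper's framework (Proposition \ref{prop:Gbpf} applied to $2D=\pi^{*}(2cD_{2})-\sum_{i\ge 3}2a_{i}B_{i}$), and your observations about the sequence $\{a_i\}$ --- nonnegativity, convexity, hence superadditivity --- are correct consequences of the proof of Lemma \ref{lem:Fnefexpression}. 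But the proposal is not a proof, and the gap you flag yourself is the genuine one: nothing in the construction controls $w_{I}$ for subsets $I$ that are not clusters of the tree defining the F-point, and the degree-balancing step is an unresolved feasibility problem in its own right. Since $G$-base-point-freeness implies nefness (see \eqref{eqn:implications}), a proof of this statement for all $n$ would in particular prove the $S_{n}$-invariant F-conjecture in arbitrary characteristic, which is open; so no elementary completion of your sketch should be expected.

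Two smaller cautions on the construction itself. First, the ``laminar recursion'' for the $\mu_{K}$ only pins down the cut-sizes $w_{J}$ for $J\in T$; it does not by itself guarantee $w_{I}\ge 2a_{|I|}$ even for laminar $I\notin T$ once the degree-filling edges are added, so condition (2) of Proposition \ref{prop:Gbpf} must be re-checked for \emph{every} $I$ with $2\le |I|\le n/2$, not just the non-laminar ones. Second, the claim that passing from $D$ to $2D$ ``supplies the needed parity'' is plausible but unproven: Example \ref{ex:bpfnotGbpf} and Remark \ref{rem:bpf} show that the failure of $G$-base-point-freeness for $D$ itself is not merely a parity phenomenon (the obstruction there is the emptiness of an integral polytope, not a $2$-divisibility issue), so the role of the factor $2$ still has to be justified. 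Your proposed fallback --- verifying the statement on a Hilbert basis of the F-nef cone for small $n$ --- is precisely what the paper does.
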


\begin{remark}\label{rem:bpf}
Most of the integral divisors are $G$-base-point-free. More precisely, for $n = 12$, there are only two integral $S_{n}$-invariant F-nef divisors which are not base-point-free. For $n = 14, 15$, there is only one for each $n$. 
\end{remark}

\begin{example}\label{ex:bpfnotGbpf}
Let $n = 12$. Consider the divisor class
\[
	D = \frac{1}{11}(4B_{2} + 12B_{3} + 13B_{4} + 18B_{5} + 16B_{6})
	= \pi^{*}(\frac{4}{11}D_{2}) - B_{4} - 2B_{5} - 4B_{6}.
\]
Here we give an example of an integral $S_{n}$-invariant base-point-free divisor which is not G-base-point-free. Then the base locus $\mathrm{Bs}(|D|_{G})$ contains the $S_{12}$-orbit of an F-point
\[
	F = B_{\{1, 2\}} \cap B_{\{1, 2, 3\}} \cap B_{\{4, 5\}} \cap 
	B_{\{4, 5, 6\}} \cap B_{\{1, 2, 3, 4, 5, 6\}} \cap B_{\{7, 8\}} \cap 
	B_{\{7, 8, 9\}} \cap B_{\{10, 11\}} \cap B_{\{10, 11, 12\}}.
\]
One can check that the locus of curves having four tails with three marked points is the base locus of $|D|_{G}$. The base locus is isomorphic to a disjoint union of 15400 disjoint unions of $(\overline{\mathrm{M}}_{0, 4})^{5}$. 

On the other hand, by using a computer and Kapranov's model, we
constructed a divisor $E \in |D|$ such that $F \notin E$. Therefore
$|D|_{G} \ne |D|$.  See \cite{MS16}.
\end{example}

Now the very ampleness of $S_{n}$-invariant ample divisors is an immediate consequence of the result of Keel and Tevelev. 

\begin{theorem}\label{thm:veryample}
Let $n \le \bpfcheckedn$. Over any algebraically closed field, for every integral $S_{n}$-invariant ample divisor $A$ on $\Mznb$, $2A$ is very ample.
\end{theorem}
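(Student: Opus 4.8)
The plan is to combine Theorem~\ref{thm:bpfcomputation} (base-point-freeness of $2D$ for F-nef $D$) with the criterion of Keel and Tevelev \cite{KT09} (or \cite{Tev07}) that guarantees very ampleness of a suitable multiple of an ample divisor on $\Mznb$. First I would recall that by Theorem~\ref{thm:semiamplecomputation} (and the implications in \eqref{eqn:implications}), for $n \le \bpfcheckedn$ every $S_{n}$-invariant ample divisor $A$ on $\Mznb$ is F-nef, so in particular $2A$ is $G$-base-point-free, hence base-point-free, by Theorem~\ref{thm:bpfcomputation}(1). This already gives a morphism $\phi_{|2A|} : \Mznb \to \PP^{N}$; the task is to upgrade this to a closed embedding.

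Next I would invoke the structural result of Keel--Tevelev: they show that for an ample divisor $A$ on $\Mznb$, if $A$ (or a fixed multiple of it) is base-point-free and separates certain boundary data, then the associated map is an embedding; more concretely, one uses their description of $\Mznb$ as sitting inside a product of smaller $\overline{\mathrm{M}}_{0,m}$'s (the Kapranov/Keel--Tevelev embeddings) to reduce very ampleness to a statement about base-point-freeness plus the injectivity of the map on a stratification. The key point is that $2A$ being base-point-free gives enough sections $Z_{\Gamma}$ (via the graphical algebra identification of $|2A|_{G}$) to separate any two distinct stable curves and any tangent vector: two stable curves differing at a node are separated by a graphical divisor $D_{\Gamma}$ passing through one boundary stratum but not the other, which is exactly the combinatorial content of Lemma~\ref{lem:baselocus} and Proposition~\ref{prop:Gbpf}. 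I would make this precise by checking that for any two distinct length-$2$ jets on $\Mznb$, the explicit generators of $|2A|_{G}$ already distinguish them, using the fact that the $D_{\Gamma}$ pull back from $(\PP^{1})^{n}\git \SL_{2}$ where $D_{2}$ is very ample (Theorem~\ref{thm:genofR}) and the boundary-stratum combinatorics of $\pi$.

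The main obstacle I anticipate is that very ampleness is not a formal consequence of base-point-freeness of $2D$ for \emph{all} F-nef $D$ — one genuinely needs an ampleness input, and one needs to know that the finitely many remaining separation conditions (separating points on a contracted boundary stratum, or separating tangent directions transverse to the boundary) are handled. Here I would lean on \cite{KT09}/\cite{Tev07}: their theorem is precisely engineered so that \emph{once} one has base-point-freeness of the relevant linear system on $\Mznb$, the Keel--Tevelev factorization through products of $\overline{\mathrm{M}}_{0,m}$ forces the map to be an immersion, and properness of $\Mznb$ then gives a closed embedding. So the argument reduces to: (i) $2A$ is base-point-free (Theorem~\ref{thm:bpfcomputation}, valid for $n \le \bpfcheckedn$), (ii) $A$ ample implies the Keel--Tevelev hypotheses are met, (iii) conclude $2A$ very ample.

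I would write the proof as: since $A$ is ample it is F-nef, so $2A$ is base-point-free by Theorem~\ref{thm:bpfcomputation}(1); the linear system $|2A|$ then defines a morphism $\Mznb \to \PP^{N}$ which, by \cite[Theorem 1.1]{KT09} (resp. \cite{Tev07}) applied with $A$ ample, is a closed immersion — the hypotheses of that theorem being exactly base-point-freeness of the system together with ampleness — hence $2A$ is very ample. The one subtlety to flag in the write-up is the characteristic: everything above (Theorems~\ref{thm:semiamplecomputation} and \ref{thm:bpfcomputation}) is proved over $\spec \ZZ$, and the Keel--Tevelev embedding results hold over any algebraically closed field, so the conclusion is valid over any algebraically closed field, matching the statement.
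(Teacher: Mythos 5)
Your first step is fine: an integral $S_{n}$-invariant ample $A$ is in particular F-nef, so $2A$ is base-point-free by Theorem \ref{thm:bpfcomputation}. But the way you close the gap from base-point-freeness to very ampleness does not work. You invoke \cite[Theorem 1.1]{KT09} (resp.\ \cite{Tev07}) as if its hypotheses were ``base-point-freeness of the linear system together with ampleness of the class''; that is not what those theorems assert. Their content is the very ampleness of one specific divisor, the log canonical class $K_{\Mznb}+B$. There is no general principle, on $\Mznb$ or elsewhere, that a base-point-free multiple of an ample divisor is automatically very ample, and your sketch of separating length-$2$ jets by graphical divisors is not carried out (and would in effect be an independent proof of very ampleness, which the paper does not attempt).

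The missing idea is a decomposition that isolates a genuinely very ample summand. Since $(K_{\Mznb}+B)\cdot F = 1$ for every F-curve $F$, and $A$ is integral and ample so that $A\cdot F\ge 1$, the difference $A-(K_{\Mznb}+B)$ is an integral $S_{n}$-invariant F-nef divisor; hence $2\bigl(A-(K_{\Mznb}+B)\bigr)$ is base-point-free by Theorem \ref{thm:bpfcomputation} (this is where the hypothesis $n\le\bpfcheckedn$ is used). Writing
\[
2A \;=\; 2\bigl(A-(K_{\Mznb}+B)\bigr) \;+\; 2\bigl(K_{\Mznb}+B\bigr),
\]
one obtains a sum of a base-point-free divisor and a very ample divisor ($K_{\Mznb}+B$ being very ample by \cite{KT09} or \cite{Tev07}), and such a sum is very ample. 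Note that the base-point-freeness theorem must be applied to $A-(K_{\Mznb}+B)$, not to $A$ itself as in your write-up; applying it to $A$ gives only a morphism, with no mechanism to promote it to a closed embedding.
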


\begin{proof}
By \cite[Theorem 1.5]{Tev07} or \cite[Theorem 1.1]{KT09}, the log canonical divisor $K_{\Mznb}+B$ is very ample. Note that for every F-curve $F$, $(K_{\Mznb}+B) \cdot F = 1$. So if $A$ is an $S_{n}$-invariant integral ample divisor, then $A - (K_{\Mznb}+B)$ is an $S_{n}$-invariant nef divisor, so $2(A - (K_{\Mznb}+B))$ is base-point-free by Theorem \ref{thm:bpfcomputation}. Therefore
\[
	2A = 2(A-(K_{\Mznb}+B)) + 2(K_{\Mznb}+B)
\]
is a sum of a base-point-free divisor and a very ample divisor, which is very ample.
\end{proof}

\begin{remark}\label{rmk:veryamplelown}
By Remark \ref{rem:bpf}, when $n \le 11$ or $n = 13$, every integral $S_{n}$-invariant nef divisor is base-point-free. So for those $n$, every integral $S_{n}$-invariant ample divisor is very ample.
\end{remark}

%%%%%%%%%%%%%%%%%%%%%%%%%%%%%%%%%%%%%

\section{Comparison to other cones}\label{sec:comparison}

There are several lower bounds of $\mathrm{Nef}(\Mznb)$. In this section we compare $S_{n}$-invariant part of them with the cone $\mathrm{GS}(\Mznb)$ of $G$-semi-ample divisors. 

In \cite{GM12}, Gibney and Maclagan defines a lower bound of $\mathrm{Nef}(\Mznb)$, by using an embedding of $\Mznb$ into a non-proper toric variety. Let $X_{\Delta}$ be a toric variety whose associated fan is $\Delta \subset \RR^{n}$. Suppose that there is a projective toric variety $X_{\Sigma}$ with $\Delta \subset \Sigma$. 

\begin{definition}
The cone $\mathcal{G}_{\Delta} \subset \mathrm{Pic}(X_{\Delta})_{\QQ}$ is the semi-ample cone of $X_{\Delta}$.
\end{definition}

Let $Y$ be a projective variety with an embedding $i : Y \hookrightarrow X_{\Delta}$. Then we obtain a lower bound 
\[
	i^{*}(\mathcal{G}_{\Delta}) \subset \nef(Y).
\]

The cone $i^{*}(\mathcal{G}_{\Delta})$ is polyhedral and can be described combinatorially.  These properties follow from the corresponding properties of $\mathcal{G}_{\Delta}$:

\begin{proposition}[\protect{\cite[Proposition 2.3]{GM12}}]
\label{prop:coneG}
Let $D = \sum_{i \in \Delta(1)}a_{i}D_{i}\in \mathrm{Pic}(X_{\Delta})$. Then the following are equivalent:
\begin{enumerate}
\item $D \in \mathcal{G}_{\Delta}$;
\item $D \in \bigcap_{\sigma \in \Delta}\mathrm{pos}(D_{i}\;|\; i \notin \sigma)$;
\item there is a piecewise linear convex function $\psi : N_{\RR} \to \RR$ and $\psi(v_{i}) = a_{i}$ where $v_{i}$ is the first integral vector in the ray $i$. 
\item $D \in \bigcup_{\Sigma}i^{*}_{\Sigma}(\nef(X_{\Sigma}))$ where the union is over all projective toric varieties $X_{\Sigma}$ with $\Delta \subset \Sigma$ and $i_{\Sigma} : X_{\Delta} \to X_{\Sigma}$ is the inclusion. Furthermore, we may assume that $\Delta(1) = \Sigma(1)$. 
\end{enumerate}
\end{proposition}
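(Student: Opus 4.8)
The plan is to establish the chain of equivalences $(1) \Leftrightarrow (2) \Leftrightarrow (3) \Leftrightarrow (4)$ by working directly with the combinatorics of the fan $\Delta$, reducing as much as possible to the complete (projective) toric case where all four characterizations of the nef cone are classical. Throughout, write $N_{\RR} \cong \RR^{n}$ for the ambient space of $\Delta$, and recall that a divisor $D = \sum a_{i}D_{i}$ on a toric variety determines for each maximal cone $\sigma$ a linear functional $m_{\sigma}$ with $\langle m_{\sigma}, v_{i}\rangle = -a_{i}$ for $i \in \sigma(1)$; the support function $\psi_{D}$ is then the piecewise-linear function that agrees with $-m_{\sigma}$ on $\sigma$. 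Semi-ampleness on a (possibly non-proper) toric variety means precisely that this support function is well-defined and convex, so $(1) \Leftrightarrow (3)$ is essentially a matter of unwinding definitions: $D \in \cG_{\Delta}$ means $\cO_{X_{\Delta}}(D)$ is globally generated, and on a toric variety global generation of a line bundle is equivalent to convexity of its (piecewise-linear) support function, exactly as in the complete case — the properness of $X_{\Delta}$ is not needed for this local-to-global statement about sections of toric line bundles.

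Next I would prove $(2) \Leftrightarrow (3)$. Given a convex piecewise-linear $\psi$ with $\psi(v_{i}) = a_{i}$, fix a maximal cone $\sigma$; convexity forces $\psi(v_{j}) \ge \ell_{\sigma}(v_{j})$ for the linear functional $\ell_{\sigma}$ agreeing with $\psi$ on $\sigma$, and translating this inequality into the divisor $\sum a_{i}D_{i}$ shows it lies in the positive span of the $D_{i}$ with $i \notin \sigma$, i.e. in $\mathrm{pos}(D_{i}\mid i \notin \sigma)$; intersecting over all $\sigma \in \Delta$ gives $(2)$. Conversely, if $D$ lies in every $\mathrm{pos}(D_{i}\mid i \notin \sigma)$, then on each maximal cone $D$ is linearly equivalent to an effective combination of the rays outside $\sigma$, which pins down a linear functional on $\sigma$; one checks these patch to a well-defined piecewise-linear function, and the membership conditions across adjacent cones give exactly the convexity inequalities. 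This is the step I expect to require the most care, because $\Delta$ is not complete: one must be attentive to rays of $\Sigma$ not in $\Delta$ and make sure the patching and the convexity check only involve data internal to $\Delta$, and that "linearly equivalent to an effective combination" is interpreted in $\mathrm{Pic}(X_{\Delta})$ rather than in a larger lattice.

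Finally, for $(3) \Leftrightarrow (4)$, given a convex $\psi$ on $N_{\RR}$ with $\psi(v_{i}) = a_{i}$, I would take $\Sigma$ to be the coarsest complete fan refining $\Delta$ on which $\psi$ is still piecewise linear — concretely, the domains of linearity of $\psi$, subdivided to be strongly convex rational cones — arranged so that $\Delta(1) = \Sigma(1)$ by not introducing new rays (possible after perturbing $\psi$ within its linearity structure, or by a standard argument that a convex PL function can be realized with linearity domains spanned by the original rays). Then $\psi$ is the support function of a nef divisor on $X_{\Sigma}$ whose pullback under $i_{\Sigma}: X_{\Delta} \to X_{\Sigma}$ is $D$, giving $(3) \Rightarrow (4)$. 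The reverse implication $(4) \Rightarrow (3)$ is immediate: the support function of a nef divisor on the complete toric variety $X_{\Sigma}$ is globally convex on $N_{\RR}$, and restricting it to $\Delta$ leaves it convex with the prescribed values on the $v_{i}$, while $i_{\Sigma}^{*}$ of a nef divisor is semi-ample, closing the loop back to $(1)$. The only real subtlety here is the construction of $\Sigma$ with $\Delta(1) = \Sigma(1)$, which is why the proposition explicitly flags "Furthermore, we may assume that $\Delta(1) = \Sigma(1)$" — I would isolate this as a lemma about completing a fan without adding rays, which holds because one can always complete to some $\Sigma'$ and then the convexity of $\psi$ lets one merge away the superfluous rays.
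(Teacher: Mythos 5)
The paper does not actually prove this proposition; it is quoted from \cite[Proposition 2.3]{GM12} and used as a black box, so there is no in-paper argument to compare against and your sketch has to stand on its own. Your chain $(1)\Leftrightarrow(2)\Leftrightarrow(3)$ is the standard toric argument and is essentially repairable, but two points need attention. First, the opening claim that $(1)\Leftrightarrow(3)$ is ``a matter of unwinding definitions'' is not right as stated: on a non-complete $X_{\Delta}$ the support function of $D$ is defined only on the support $|\Delta|$, which is not convex, so ``convexity of the support function'' is not yet meaningful. The genuine definition-unwinding step is $(1)\Leftrightarrow(2)$, via the global-generation criterion that for each maximal cone $\sigma$ there exists $m_{\sigma}\in M$ with $\langle m_{\sigma}, v_{i}\rangle \ge -a_{i}$ for all $i$ and equality for $i\in\sigma(1)$; this is literally membership in $\mathrm{pos}(D_{i} \mid i\notin\sigma)$. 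Producing from these local data a single convex function on all of $N_{\RR}$ (e.g.\ $\psi:=\max_{\sigma}(-m_{\sigma})$) is the content of $(2)\Rightarrow(3)$, not a tautology. Second, your argument for $(3)\Rightarrow(2)$ uses ``the linear functional agreeing with $\psi$ on $\sigma$,'' so you are implicitly reading into (3) that $\psi$ is linear on each cone of $\Delta$; you should say so explicitly, since for a non-simplicial cone a convex $\psi$ interpolating the $a_{i}$ at the rays need not admit one supporting linear functional touching all rays of $\sigma$ simultaneously, and without that reading the implication $(3)\Rightarrow(2)$ is false.

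The genuine gap is $(3)\Rightarrow(4)$, and especially the clause $\Delta(1)=\Sigma(1)$, which is where the real content of the proposition lives. The domains of linearity of a convex $\psi$ form (up to handling non-pointed pieces) the normal fan of a polyhedron, but its maximal cones merely \emph{contain} the cones of $\Delta$ rather than having them as cones, and its rays are facet normals with no relation to $\Delta(1)$; so ``the coarsest complete fan refining $\Delta$ on which $\psi$ is linear'' is not obviously a fan containing $\Delta$ as a subfan, not obviously projective after the subdivisions needed to make it one, and not obviously free of new rays. Neither of your proposed fixes works as stated: perturbing $\psi$ changes the values $a_{i}$ and hence the divisor class, and deleting (``merging away'') rays from a complete fan does not in general leave a fan, let alone a projective one containing $\Delta$. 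Note also that your sketch never uses the standing hypothesis that $\Delta$ is a subfan of some projective fan, which is exactly what a correct completion argument must lean on; and some implicit hypothesis beyond that is needed for the last clause, since for degenerate $\Delta$ (a single ray in $\RR^{2}$, $D=0$) no complete fan with the same ray set exists. You should isolate the completion statement as a lemma and actually prove it (or cite \cite{GM12} for it), rather than treating it as routine.
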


It is well-known that $\Mznb$ can be embedded into a non-proper toric variety $X_{\Delta}$ where $\Delta$ is the space of phylogenetic trees. Thus we obtain a lower bound $i^{*}(\mathcal{G}_{\Delta})$ of $\mathrm{Nef}(\Mznb)$. 

On the other hand, in \cite{Fed14b}, Fedorchuk introduced another combinatorial notion of \emph{boundary semi-ampleness}. 

\begin{definition}
Let $D$ be a divisor on $\Mznb$. $D$ is \emph{boundary semi-ample} if for every $x \in \Mznb$, there exists an effective boundary $\QQ$-divisor $E \in |D|$ such that $x \notin \mathrm{Supp}(E)$. 
\end{definition}

The following result was pointed out by Fedorchuk. 

\begin{proposition}
Suppose that $D$ is an $S_{n}$-invariant divisor on $\Mznb$. Then the following three conditions are equivalent:
\begin{enumerate}
\item $D \in \mathrm{GS}(\Mznb)$;
\item $D$ is boundary semi-ample;
\item $D \in i^{*}(\mathcal{G}_{\Delta})$.
\end{enumerate}
\end{proposition}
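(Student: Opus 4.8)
The plan is to prove the three-way equivalence by establishing a cycle of implications, using the combinatorial descriptions of all three cones that are available in the excerpt. The cones $\mathrm{GS}(\Mznb)$ and $i^{*}(\mathcal{G}_{\Delta})$ both have explicit polyhedral descriptions (Corollary \ref{cor:polyhedralcone} and Proposition \ref{prop:coneG} respectively), while boundary semi-ampleness is defined by a pointwise non-vanishing condition, so the natural route is $(1)\Rightarrow(2)\Rightarrow(3)\Rightarrow(1)$, with the middle implication being the one to worry about.

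For $(1)\Rightarrow(2)$: if $D = \pi^{*}(cD_{2}) - \sum_{i\ge 3}a_iB_i$ is $G$-semi-ample, then by definition (after passing to a suitable multiple and using that $G$-semi-ampleness is preserved under scaling) $|mD|_G$ is base-point-free, and each generator $\widetilde{D}_\Gamma$ is, by the description in Lemma \ref{lem:baselocus2}, an effective \emph{boundary} divisor. So through each point $x$ there passes some $\widetilde{D}_\Gamma$ missing $x$; this is exactly boundary semi-ampleness (noting that $\Mznb/S_n$ is covered by boundary strata, and every point lies in the closure of a boundary stratum, so it suffices to check at general points of boundary strata, which is what Lemma \ref{lem:baselocus} and Proposition \ref{prop:Gbpf} handle combinatorially).

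For $(2)\Rightarrow(3)$: this is where the argument has to do real work. I would use the embedding $\Mznb \hookrightarrow X_\Delta$ into the toric variety of phylogenetic trees, together with characterization (2) of Proposition \ref{prop:coneG}: $D\in i^{*}(\mathcal{G}_\Delta)$ iff the corresponding class lies in $\bigcap_{\sigma\in\Delta}\mathrm{pos}(D_i \mid i\notin\sigma)$. The key is that the boundary divisors $B_I$ of $\Mznb$ are precisely the restrictions $i^{*}D_i$ of the torus-invariant divisors $D_i$ of $X_\Delta$, and the cones $\sigma\in\Delta$ correspond to compatible sets of boundary divisors, i.e. to boundary strata. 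Given a boundary-semi-ample $D$, for each boundary stratum (equivalently each $\sigma$) one produces, from an effective boundary $E\in|D|$ avoiding a general point of that stratum, an expression of $D$ as a nonnegative combination of the $B_I$ with $I\notin\sigma$ — this is exactly membership in $\mathrm{pos}(D_i\mid i\notin\sigma)$. The subtlety is matching up the toric-theoretic positivity condition with the geometric non-vanishing condition on $\Mznb$; here I would lean on the dictionary between faces of the phylogenetic fan and boundary strata, and on the fact that an $S_n$-invariant effective boundary divisor linearly equivalent to $D$ that avoids a general point of a stratum $B$ must have all its $B_I$-coefficients supported on $I$ incompatible with $B$.

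For $(3)\Rightarrow(1)$: given $D\in i^{*}(\mathcal{G}_\Delta)$, use characterization (4) of Proposition \ref{prop:coneG} — there is a complete projective toric variety $X_\Sigma$ with $\Delta\subset\Sigma$, $\Delta(1)=\Sigma(1)$, on which $D$ extends to a nef (hence, for toric varieties, semi-ample) divisor; its sections restrict to sections of $D$ on $\Mznb$ whose supports are unions of boundary divisors, and one checks combinatorially that these sections realize the conditions of Proposition \ref{prop:Gbpf} at every F-point, i.e. $D$ (or a multiple) is $G$-base-point-free, hence $G$-semi-ample, hence $D\in\mathrm{GS}(\Mznb)$. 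Alternatively, and perhaps more cleanly, one can compare the two polyhedral descriptions directly: unwinding Corollary \ref{cor:polyhedralcone}, $D\in\mathrm{GS}(\Mznb)$ says that for each F-point (equivalently each maximal cone $\sigma$) there is a graph weighting $w$ with prescribed vertex degrees, the inequalities $w_I\ge a_{|I|}$, and equalities $w_J=a_{|J|}$ for $J\in T$; and this graph weighting is, after the change of coordinates $B_I\leftrightarrow D_i$, precisely a certificate that $D$ lies in $\mathrm{pos}(D_i\mid i\notin\sigma)$. Thus the two cones are cut out by the same linear inequalities at each stratum and therefore coincide. I expect the main obstacle to be bookkeeping the correspondence between multigraphs on $[n]$ (equivalently graph weightings in $V=\QQ^{\binom n2}$) and points of the phylogenetic fan, and verifying that the numerical conditions $w_I\ge a_{|I|}$ translate exactly into the toric positivity $D\in\mathrm{pos}(D_i\mid i\notin\sigma)$; once that dictionary is set up carefully, all three equivalences fall out of linear algebra over $\QQ$ and the characterizations already recorded above.
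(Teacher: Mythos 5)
Your skeleton is right and two of the three arrows are handled essentially as in the paper: $(1)\Rightarrow(2)$ is exactly as you say, since each generator $\widetilde{D}_{\Gamma}=\pi^{*}(D_{\Gamma})-\sum a_{i}B_{i}=\sum_{I}(w_{I}-a_{|I|})B_{I}$ is an effective divisor supported on the boundary, so $G$-semi-ampleness hands you an effective boundary $\QQ$-divisor in $|D|$ through the complement of any given point; and $(2)\Leftrightarrow(3)$ is the formal step the paper also dispatches in one line from Proposition \ref{prop:coneG}(2), because the $B_{I}$ are restrictions of the toric boundary and an effective boundary representative misses a general point of the stratum of $\sigma$ exactly when its support lies in $\{B_{I}\mid I\notin\sigma\}$. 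You have, however, misplaced the difficulty: the substantive implication is the one closing the cycle, $(3)\Rightarrow(1)$ (equivalently $(2)\Rightarrow(1)$), and there your argument is an assertion rather than a proof.

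The gap is this. Membership in $\rho(Q(n,F))$ and membership in $\mathrm{pos}(B_{I}\mid I\notin T)$ are both existential statements, but over different sets of certificates: a graph weighting $w$ produces the very special boundary representative $\sum_{I}(w_{I}-a_{|I|})B_{I}$, whereas a point of $\mathrm{pos}(B_{I}\mid I\notin T)$ is an arbitrary nonnegative combination $\sum_{I\notin T}c_{I}B_{I}$ representing the class of $D$. Since the $B_{I}$ satisfy many linear relations in $\mathrm{Pic}(\Mznb)$, a class can admit boundary-effective representatives none of which is visibly graphical, so your claim that ``the two cones are cut out by the same linear inequalities at each stratum'' would require eliminating the $\binom{n}{2}$ variables $w_{ij}$ from $Q(n,F)$ (a Fourier--Motzkin or LP-duality computation) and matching the projection with $\mathrm{pos}(B_{I}\mid I\notin T)$ --- precisely the computation you skip. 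Your alternative route via Proposition \ref{prop:coneG}(4) has the same hole: sections of the nef extension to $X_{\Sigma}$ restrict to boundary-supported divisors, but nothing makes them of the form $\widetilde{D}_{\Gamma}$ for a multigraph $\Gamma$. The paper closes exactly this gap by importing Fedorchuk's results: his Lemma 3.2.3 rewrites $D$ via a symmetric F-nef function $f$, his Lemma 2.3.3 converts the existence of an effective boundary representative avoiding an F-point $F$ into feasibility of a linear system in a weighting $m:E_{K_{n}}\to\QQ$ stated in terms of cross-edge sums, and the explicit substitution $w(ij)=-2m(ij)$, $f(i)=(-ci(n-1)+a_{i})/2$ turns that system into the defining conditions of $Q(n,F)$. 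Some such duality input is unavoidable; without it the dictionary between weightings and boundary classes only gives the easy inclusion $\mathrm{GS}(\Mznb)\subset i^{*}(\mathcal{G}_{\Delta})$, not its converse.
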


\begin{proof}
Clearly a $G$-semi-ample divisor $D$ is boundary semi-ample. 

Because $\mathrm{Pic}(X_{\Delta}) \cong \mathrm{Pic}(\Mznb)$ and each boundary divisor $B_{I}$ is a restriction of a toric boundary, from item (2) of Proposition \ref{prop:coneG}, it is straightforward to see that $D$ is boundary semi-ample if and only if $D \in i^{*}(\mathcal{G}_{\Delta})$. 

So it is sufficient to show that every $S_{n}$-invariant boundary semi-ample divisor is $G$-semi-ample. Suppose that $D$ is boundary semi-ample and $S_{n}$-invariant. By \cite[Lemma 3.2.3]{Fed14b}, 
\[
	D = \mathcal{D}(\ZZ_{n}, f; (1)_{n}) 
	= f(1)\psi - \sum_{2 \le i \le n/2}f(i)B_{i}
\]
for some symmetric F-nef function $f : \ZZ_{n} \to \QQ$ (See \cite[Section 3]{Fed14b} for the notation). On the other hand, $D$ can be uniquely written as $\pi^{*}(cD_{2}) - \sum_{i \ge 3}a_{i}B_{i}$. If we set $a_{1} = a_{2} = 0$, then by using $\pi^{*}(D_{2}) = \frac{c(n-1)}{2}(-\psi+\sum_{i\ge 2} iB_{i})$, one can see that $f(i) = (-ci(n-1) + a_{i})/2$. 

Then by \cite[Lemma 2.3.3]{Fed14b}, for an F-point $F = \cap_{I \in T}B_{I}$, there is an effective sum of boundary $E \in |D|$ such that $F \notin \mathrm{Supp}(E)$ if and only if there is a graph weighting $m : E_{K_{n}} \to \QQ$ such that
\begin{enumerate}
\item $\sum_{j \ne i}m(ij) = f(1)$;
\item $\sum_{i \in I, j \in I^{c}}m(ij) \ge f(i)$;
\item For $I \in T$, $\sum_{i \in I, j \in I^{c}}m(ij) = f(i)$.
\end{enumerate}
Now set $w(ij) := -2m(ij)$. Then from (1), $\sum_{j \ne i}w(ij) = -2(\sum_{j \ne i}m(ij)) = c(n-1)$. Note that if $|I| = i$, then $2\sum_{i, j \in I}w(ij) + \sum_{i \in I, j \in I^{c}}w(ij) = ic(n-1)$. So 
\[
\begin{split}
	ic(n-1) &= 2\sum_{i, j \in I}w(ij) + \sum_{i \in I, j \in I^{c}}w(ij)
	= 2\sum_{i, j \in I}w(ij) - 2\sum_{i \in I, j \in I^{c}}w(ij)
	\le 2\sum_{i, j \in I}w(ij) - 2f(i)\\
	&= 2\sum_{i,j\in I}w(ij) + ci(n-1) - a_{i},
\end{split}
\]
and this implies $\sum_{i, j \in I}w(ij) \ge a_{i}$. Similarly, Item (3) is $\sum_{i, j \in I}w(ij) = a_{i}$ for all $I \in T$. Therefore we obtain precisely, the inequalities and equalities for $G$-semi-ampleness. Therefore $D$ is $G$-semi-ample. 
\end{proof}

\newcommand{\etalchar}[1]{$^{#1}$}

%
%
%\bibliographystyle{alpha}
%%\newcommand{\etalchar}[1]{$^{#1}$}
%\bibliography{Library}

\end{document}